\declaretheoremstyle[
    headfont=\bfseries\color{black},
    bodyfont=\normalfont,
    spaceabove=2pt,
    spacebelow=1pt,
    mdframed={style=mdyellowbox},
    headpunct={ --- }
]{thmyellowbox}
\declaretheoremstyle[
    headfont=\bfseries\color{black},
    bodyfont=\normalfont,
    spaceabove=2pt,
    spacebelow=1pt,
    mdframed={style=mdlilacbox},
    headpunct={ --- }
]{thmlilacbox}
\declaretheoremstyle[
    headfont=\bfseries\color{black},
    bodyfont=\normalfont,
    spaceabove=2pt,
    spacebelow=1pt,
    mdframed={style=mdbluebox},
    headpunct={ --- }
]{thmbluebox}
\declaretheoremstyle[
    headfont=\bfseries\color{black},
    bodyfont=\normalfont,
    spaceabove=2pt,
    spacebelow=1pt,
    mdframed={style=mdgreenbox},
    headpunct={ --- }
]{thmgreenbox}
\declaretheoremstyle[
    headfont=\bfseries\color{black},
    bodyfont=\normalfont,
    spaceabove=2pt,
    spacebelow=1pt,
    mdframed={style=mdredbox},
    headpunct={ --- }
]{thmredbox}
\declaretheoremstyle[
    headfont=\bfseries\color{black},
    bodyfont=\normalfont,
    spaceabove=2pt,
    spacebelow=1pt,
    mdframed={style=mdbrownbox},
    headpunct={ --- }
]{thmbrownbox}
\declaretheoremstyle[
    headfont=\bfseries\color{black},
    bodyfont=\normalfont,
    spaceabove=2pt,
    spacebelow=1pt,
    mdframed={style=mdgraybox},
    headpunct={ --- }
]{thmgraybox}
\theoremstyle{definition}
\declaretheorem[name=Theorem,sibling=theorem,style=thmredbox,numberwithin=section]{theorem}
\declaretheorem[name=Proposition,sibling=theorem,style=thmyellowbox]{proposition}
\declaretheorem[name=Lemma,sibling=theorem,style=thmbluebox]{lemma}
\declaretheorem[name=Corollary,sibling=theorem,style=thmlilacbox]{corollary}
\declaretheorem[name=Conjecture,sibling=theorem,style=thmbrownbox]{conjecture}
\renewcommand{\a}{\mathfrak{a}}
\renewcommand{\b}{\mathfrak{b}}
\renewcommand{\c}{\mathfrak{c}}
\newcommand{\G}{\mathbf{G}}
\newcommand{\g}{\mathfrak{g}}
\newcommand{\h}{\mathfrak{h}}
\renewcommand{\L}{\mathcal{L}}
\newcommand{\N}{\mathbb{N}}
\newcommand{\n}{\mathfrak{n}}
\newcommand{\m}{\mathfrak{m}}
\newcommand{\p}{\mathfrak{p}}
\newcommand{\Q}{\mathbb{Q}}
\newcommand{\R}{\mathbb{R}}
\newcommand{\s}{\mathfrak{s}}
\renewcommand{\t}{\mathfrak{t}}
\newcommand{\V}{\mathcal{V}}
\newcommand{\x}{\textnormal{\textbf{x}}}
\newcommand{\Z}{\mathbb{Z}}
\DeclareMathOperator{\Ann}{Ann}
\DeclareMathOperator{\Aut}{Aut}
\newcommand{\rad}{\textnormal{rad}}
\newcommand{\del}{\partial}
\DeclareMathOperator{\GL}{GL}
\DeclareMathOperator{\gr}{gr}
\DeclareMathOperator{\id}{id}
\renewcommand{\sl}{\mathfrak{sl}}
\renewcommand{\sp}{\mathfrak{sp}}
\DeclareMathOperator{\SP}{SP}
\DeclareMathOperator{\tr}{tr}
\newcommand{\wh}{\widehat}
\DeclareMathSymbol{\shortminus}{\mathbin}{AMSa}{"39} 
\newcommand{\norm}[1]{\left\lVert #1 \right\rVert}
\newcommand{\abs}[1]{\left\lvert #1 \right\rvert}
\newcommand{\aff}[2][]{\wh{U ( \mathfrak{#2}_{#1})}_K}
\newcommand{\hyp}[2][]{U ( \mathfrak{#2}_{#1})_K}
\newcommand{\Weyl}[1][n]{\wh{A_{#1}(R)}_K}
\newcommand\restr[2]{{
  \left.\kern-\nulldelimiterspace 
  #1 
  \littletaller 
  \right|_{#2} 
  }}
\newcommand{\littletaller}{\mathchoice{\vphantom{\big|}}{}{}{}}
\newcommand{\smallbullet}{} 
\DeclareRobustCommand\smallbullet{%
  \mathord{\mathpalette\smallbullet@{0.7}}%
}
\newcommand{\smallbullet@}[2]{%
  \vcenter{\hbox{\scalebox{#2}{$\,\m@th#1\bullet$}}}%
}
\newcommand\blfootnote[1]{
  \begingroup
  \renewcommand\thefootnote{}\footnote{#1}%
  \addtocounter{footnote}{-1}%
  \endgroup
}
\begin{document}

\title{The Metaplectic Representation is Faithful}
\date{May 2025}
\author{Christopher Chang\thanks{St Anne's College, University of Oxford}, Simeon Hellsten\thanks{St John's College, University of Oxford}, Mario Marcos Losada\thanks{Brasenose College, University of Oxford} and Sergiu Novac\protect\footnotemark[2]}
\maketitle
\begin{abstract}
    \noindent  We develop methods to show that infinite-dimensional modules over the Iwasawa algebra $KG$ of a uniform pro-$p$ group are faithful and apply them to show that the metaplectic representation for the group $G=\exp(p\, \sp_{2n}(\Z_p))$ is faithful. \blfootnote{\textit{Email addresses}: \href{mailto:christopher.chang@st-annes.ox.ac.uk}{\texttt{christopher.chang@st-annes.ox.ac.uk}} (C. Chang), \href{mailto:simeon.hellsten@sjc.ox.ac.uk}{\texttt{simeon.hellsten@sjc.ox.ac.uk}} (S. Hellsten), \href{mailto:mario.marcoslosada@bnc.ox.ac.uk}{\texttt{mario.marcoslosada@bnc.ox.ac.uk}} (M. Marcos Losada), \href{mailto:sergiu-ionut.novac@sjc.ox.ac.uk}{\texttt{sergiu-ionut.novac@sjc.ox.ac.uk}} (S. Novac)} 
\end{abstract}

\section{Introduction}
Let $p$ be an odd prime and $G$ be a uniform pro-$p$ group. Let $K$ be a finite extension of $\Q_p$ with valuation ring $R\coloneqq\{x\in K: |x|_p\le 1\}$. We are interested in the prime ideals of the Iwasawa algebra $KG\coloneqq RG\otimes_R K$ where $RG\coloneqq\varprojlim R[G/N]$ and the inverse limit is taken over all open normal subgroups $N\lhd_o G$. If $G$ has a closed normal subgroup $N\lhd_c G$ such that $G/N$ is uniform, then $(N-1)KG=\ker(KG\to K(G/N))$ is a prime ideal of $KG$ so we will restrict to almost simple groups $G$. We are motivated by the following conjecture.

\begin{conjecture}\label{Conj: Ideal structure}
    Let $G$ be an almost simple uniform pro-$p$ group. Then every non-zero prime ideal of $KG$ has finite codimension.
\end{conjecture}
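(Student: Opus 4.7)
My plan is to reduce the conjecture to the statement that every infinite-dimensional simple $KG$-module is faithful. Suppose this is known, and let $P \triangleleft KG$ be a nonzero prime with $KG/P$ infinite-dimensional over $K$. Then $KG/P$ is a prime Noetherian ring, since $KG$ is Noetherian for $G$ uniform, and by Goldie's theorem its simple Artinian quotient ring has the form $M_n(D)$ for some division ring $D$; the embedding $KG/P \hookrightarrow M_n(D)$ forces $D$ to be infinite-dimensional over $K$. The simple $M_n(D)$-module $D^n$ then restricts to an infinite-dimensional simple $KG$-module $M$ with $P \subseteq \Ann(M)$, and the assumed faithfulness forces $P = 0$, a contradiction.

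The main task is therefore the faithfulness statement itself. The plan is to induct on $\dim G$ and case-split on the Dynkin type of the $\Q_p$-Lie algebra $\g = \Log(G)$, exploiting that $G$ is almost simple. For each type, one exhibits a distinguished infinite-dimensional simple $KG$-module -- the analogue of the metaplectic representation -- and verifies faithfulness directly. The symplectic case treated in the present paper provides the template: realise the representation on a completion of a polynomial ring, show that the annihilator is controlled by the natural filtration on $KG$, and rule out nonzero elements of the annihilator via explicit Weyl-algebra-style computations in the associated graded ring $\gr KG$. To promote this to a statement about \emph{every} infinite-dimensional simple, one would argue that the annihilator of any such simple is contained in the annihilator of the distinguished minimal representation, reducing faithfulness of all to faithfulness of one.

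The principal obstacle is twofold. First, the construction of a suitable minimal representation is deeply type-dependent: the metaplectic representation is natural for type $C$, while analogues for types $B$, $D$, and the exceptional types require Joseph-ideal or Kirillov-orbit style constructions that are not yet worked out in the completed $p$-adic setting. Second, reducing the faithfulness of arbitrary infinite-dimensional simples to that of a single minimal representation requires fine control of the primitive spectrum of $KG$ -- morally, a $p$-adic analogue of Duflo's theorem for $\Hyp{\g}$ -- which is presently unavailable and would have to be developed as part of the argument.
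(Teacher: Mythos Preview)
The statement you are attempting to prove is \cref{Conj: Ideal structure}, which in the paper is explicitly a \emph{conjecture}: the paper does not prove it and offers no proof to compare your proposal against. The paper's contribution is to verify one consequence of the conjecture in the symplectic case (faithfulness of a single distinguished representation), not the conjecture itself. So there is no ``paper's own proof'' here, and your proposal should be read as a research strategy for an open problem rather than as an alternative argument.

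Even as a strategy, there is a concrete gap in your first paragraph. From a nonzero prime $P$ of infinite codimension you pass to the Goldie quotient ring $M_n(D)$ of $KG/P$ and take the simple $M_n(D)$-module $D^n$. You then assert that $D^n$ \emph{restricts to an infinite-dimensional simple $KG$-module}. Infinite-dimensionality is fine, but simplicity is not: a simple module over an overring need not remain simple upon restriction to a subring, and there is no general reason for $D^n$ to be simple over $KG/P$. What you do get is that $D^n$ is a faithful $KG/P$-module, hence $\Ann_{KG}(D^n)=P$; but without simplicity you cannot invoke the hypothesis ``every infinite-dimensional simple is faithful''. In fact the statement you reduce to concerns only \emph{primitive} ideals, whereas the conjecture is about all primes, and bridging that gap (showing every prime of infinite codimension lies below a primitive ideal of infinite codimension, or that it is itself primitive) is a nontrivial step you have not addressed.

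Beyond that, the second and third paragraphs are an honest description of obstacles rather than a proof: you correctly identify that a type-by-type construction of minimal representations and a $p$-adic Duflo-type control of the primitive spectrum would both be needed, and that neither currently exists. That is an accurate assessment of why the conjecture is open, but it means your proposal is not a proof.
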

In particular, if the conjecture is true, then every infinite-dimensional representation of $KG$ must be faithful. When the Lie algebra of $G$ is of Type $A$, \cref{Conj: Ideal structure} was proven in \cite{manndiss}, but the methods there do not generalise to other types. Similar results of this form can also be found for example in \cite{ardakov2013verma}. In this paper, we develop methods to approach these types of questions more generally and we apply them to the case of $G\coloneqq \exp(p\g),$ where $\g\coloneqq\sp_{2n}(\Z_p).$
We start with the metaplectic representation from \cref{Prop: Metaplectic}
and explain how to lift this to an algebra homomorphism $\rho:\aff{g} \to \Weyl$, where $\aff{g}$ is the affinoid enveloping algebra and $\Weyl$ is the completed Weyl algebra, both defined after \cref{Prop: Metaplectic}. We can finally embed $KG$ into $\aff{g}$ and in \cref{Thm: Main} we then prove the following.

\begin{theorem}
    The metaplectic representation $\restr{\rho}{KG}:KG\to\Weyl$ is injective.
\end{theorem}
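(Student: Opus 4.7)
The plan is to invoke the general faithfulness machinery that the paper develops for Iwasawa-algebra modules, applied to the specific module underlying the metaplectic representation, after first identifying the conceptual obstacle the proof must overcome.

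Fix the target module concretely: $\Weyl$ acts faithfully on a Tate-type completion of $K[x_1,\ldots,x_n]$ (with $x_i$ acting by multiplication and $\partial_i$ by differentiation), so injectivity of $\restr{\rho}{KG}$ is equivalent to the faithfulness of this module as a $KG$-module via the composite $KG\hookrightarrow \aff{g}\xrightarrow{\rho}\Weyl$. Reducing the problem to module-theoretic faithfulness lets us apply the machinery developed earlier in the paper directly.

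A conceptual warning guides the strategy: the analogous map of \emph{enveloping} algebras $U(\g)_K\to\Weyl$ has a genuine non-zero kernel --- the affinoid analog of the Joseph ideal of $\sp_{2n}$, whose principal-symbol ideal cuts out the closure of the minimal nilpotent coadjoint orbit in $\g^\ast$. In particular any argument based purely at the symbol or associated-graded level of $\aff{g}$ is doomed, and the proof must genuinely exploit that $KG$ sits inside $\aff{g}$ as the closed subring topologically generated by the group-like exponentials $\{\exp(x):x\in p\g\}$, whose defining coefficient-growth conditions are \emph{strictly stronger} than those cutting out $\aff{g}$.

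With this in mind, the strategy is to apply the general faithfulness criterion from earlier in the paper. Such a criterion typically reduces faithfulness to checking that topological generators of $G$ act by ``non-degenerate'' operators: for each $0\neq y\in p\g$ the image $\rho(\exp y)=\exp(\rho(y))$ is a non-identity metaplectic automorphism of the Tate module (since $\rho(y)$ is a non-zero quadratic differential operator), and any finite $K$-linear combination of distinct group-like translates acts non-trivially. Using the uniform pro-$p$ structure of $G$, one expands any $z\in KG$ in a canonical ordered-exponential basis with coefficients of bounded valuation, and a continuity argument propagates the finite-support independence to the completion.

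The main obstacle, and the point at which the general framework must do real work, is the upgrade from ``finite sums of group-likes are linearly independent in $\Weyl$'' to ``no convergent element of $KG$ lies in $\ker\rho$''. Equivalently, this is the transversality statement that the affinoid Joseph ideal inside $\aff{g}$ meets the subring $KG$ only in zero: the defining coefficient-growth conditions on elements of $KG$, which come from the $p$-adic analytic structure of $G=\exp(p\g)$, have to be shown incompatible with the growth forced by any non-zero element of $\ker(\rho)$. Verifying this transversality for $\g=\sp_{2n}(\Z_p)$ --- in practice, by inputting the metaplectic $\rho$ and its known symbol into the paper's general criterion --- is the technical heart of the argument.
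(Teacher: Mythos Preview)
Your proposal is not a proof; it is a correct diagnosis of the difficulty followed by hand-waving at the point where the real work begins. You rightly note that $\rho:\aff{g}\to\Weyl$ has non-zero kernel (the affinoid Joseph ideal), so the argument must exploit the specific embedding $KG\hookrightarrow\aff{g}$. But your proposed mechanism --- ``topological generators act by non-degenerate operators'' and ``propagate finite-support independence to the completion by continuity'' --- is precisely the step that fails without further input. Linear independence of finitely many group-likes in $\Weyl$ is easy; it says nothing about convergent infinite sums, and no abstract continuity argument bridges that gap. Your final paragraph concedes this (``the technical heart of the argument'') and then simply asserts that feeding $\rho$ into ``the paper's general criterion'' does the job, without saying which criterion, with which inputs, or why the hypotheses are met.

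The paper's actual argument is structurally different from what you describe. The Gluing Lemma is not a ``non-degeneracy of generators'' criterion: it requires a decomposition $\g=\n\oplus\h$, a family of vectors $v$ with $\aff{h}$-submodules $W_v\subseteq\hyp{n}\cdot v$, local finiteness of $RH$ on $W=\sum W_v$, faithfulness of $KH$ on $W$, and injectivity of the multiplication map $KN\otimes_K\hyp{n}/I_v'\to\aff{n}/I_v$. The proof of the theorem applies this lemma iteratively to the triangular decomposition $\g=\a\oplus\b\oplus\c$: first an inductive argument (itself using the Gluing Lemma and the Fourier automorphisms $\sigma_i$) establishes the multiplication-map condition for $\c$; then another induction shows $KA$ acts faithfully; then two more applications glue in $\b$ and $\c$. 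None of this structure --- the choice of $\n,\h$, the explicit $W_v$ built from monomials, the local-finiteness verification, the Fourier symmetries --- appears in your proposal. You need to engage with those hypotheses concretely rather than invoke an unspecified criterion.
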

The main tool to do this is the following Gluing Lemma, a special case of the general version from \cref{Sec: Gluing lemma}. Here, $U(\n)_K=U(\n)\otimes_{\Z_p} K$.

\begin{proposition}\label{Prop: Gluing I}
    Let $\n, \h$ and $\g\coloneqq\n\oplus\h$ be finite rank $\Z_p$-Lie algebras with corresponding uniform pro-$p$ groups $N,H$ and $G$ and let $V$ be a $\aff{g}$-module and a $K-$Banach space, such that $\left( N-1 \right) \cdot B_V \subseteq pB_V,$ where $B_V\coloneqq\{v\in V: \norm{v}\le 1\}$. Suppose there is a subset $\V$ of $V$ and for each $v\in\V$ a $\aff{h}$-submodule $W_v$ of $V$ contained in $\hyp{n}\cdot v$ such that
    \begin{itemize}
        \item $RH$ acts locally finitely on $W\coloneqq \sum_{v\in\V} W_v$, meaning that every cyclic $RH$-submodule of $W$ is finitely generated over $R$.
        \item $KH$ acts faithfully on $W$.
        \item For every $v\in\V$ the multiplication map $KN\otimes_K \faktor{\hyp{n}}{I_v'}\to \faktor{\aff{n}}{I_v}$ is injective, where $I_v'=\Ann_{\hyp{n}}(v)$ and $I_v=\Ann_{\aff{n}}(v)$.
    \end{itemize}
    Then any $KG$-submodule $V_0$ of $V$ containing $W$ is faithful.
\end{proposition}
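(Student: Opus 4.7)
Plan: Assume $x \in KG$ annihilates $V_0$; since $W \subseteq V_0$ it suffices to prove $xW = 0 \Rightarrow x = 0$. The direct-sum decomposition $\g = \n \oplus \h$ and the fact that $G, N, H$ are the corresponding uniform pro-$p$ groups make the multiplication $N \times H \to G$ a homeomorphism, yielding a topological decomposition $KG \cong KN\,\widehat{\otimes}_K\,KH$ as $K$-Banach spaces. I fix a topological $K$-basis $\{h_\alpha\}_\alpha$ of $KH$ and write $x = \sum_\alpha n_\alpha h_\alpha$ uniquely with $n_\alpha \in KN$ and $n_\alpha \to 0$; the aim is to deduce $n_\alpha = 0$ for every $\alpha$.

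Fix $v \in \V$ and $w \in W_v$. By local finiteness of $RH$ on $W$, the $KH$-submodule $KH \cdot w \subseteq W_v$ is finite-dimensional over $K$; pick a $K$-basis $e_1, \ldots, e_m$ of it. Since $W_v$ is $\aff{h}$-stable and contained in $\hyp{n}\cdot v$, each $e_i = y_i v$ for some $y_i \in \hyp{n}$ whose classes $\bar{y}_i \in \hyp{n}/I_v'$ are $K$-linearly independent (being the images of a basis under the isomorphism $J_v/I_v' \xrightarrow{\sim} W_v$, where $J_v = \{y \in \hyp{n}: yv \in W_v\}$). Expanding $h_\alpha w = \sum_i c_{\alpha, i}\, e_i$ with $c_{\alpha, i} \in K$, the hypothesis $xw = 0$ rearranges to
\[
  0 \;=\; xw \;=\; \sum_i n'_i\, y_i\, v \quad \text{in } V,
\]
where $n'_i := \sum_\alpha c_{\alpha, i}\, n_\alpha \in KN$ (this sum converges because $n_\alpha \to 0$ and the $c_{\alpha, i}$ are bounded in $\alpha$). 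Hence $\sum_i n'_i y_i \in I_v$; by the third hypothesis this lifts to $\sum_i n'_i \otimes \bar{y}_i = 0$ in $KN \otimes_K (\hyp{n}/I_v')$, and $K$-linear independence of $\{\bar{y}_i\}$ forces $n'_i = 0$ for each $i$.

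Therefore $\sum_\alpha c_{\alpha, i}(w)\, n_\alpha = 0$ in $KN$ for every $w \in W$ and every basis index $i$. Applying any continuous $K$-linear functional $\phi: KN \to K$ gives $\sum_\alpha c_{\alpha, i}(w)\, \phi(n_\alpha) = 0$ for all such $w, i$, which says precisely that the element $h := \sum_\alpha \phi(n_\alpha)\, h_\alpha \in KH$ acts as zero on $W$. The faithfulness of $KH$ on $W$ then forces $h = 0$, hence $\phi(n_\alpha) = 0$ for every $\alpha$ (since the $h_\alpha$ form a basis). Since $\phi$ was arbitrary, each $n_\alpha = 0$, so $x = 0$.

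The main obstacle is the topology/convergence bookkeeping: establishing the Banach-space decomposition $KG \cong KN \,\widehat\otimes_K\, KH$ (using the uniform pro-$p$ structure and the Campbell--Baker--Hausdorff formula to obtain the homeomorphism $N \times H \xrightarrow{\sim} G$), and verifying that the scalars $c_{\alpha, i}(w)$ are bounded in $\alpha$ so that the reorganised sums $\sum_\alpha c_{\alpha, i}(w)\, n_\alpha$ genuinely converge in $KN$. The normalisation $(N - 1) B_V \subseteq p B_V$ is what makes these estimates uniform, so that the rearrangement of infinite sums used in the derivation above is legitimate.
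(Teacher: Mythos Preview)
There is a genuine gap in the convergence bookkeeping, and it is not merely cosmetic. Elements of $KG$ are power series $\sum_{\gamma}\lambda_\gamma(\mathbf G-1)^\gamma$ with the $\lambda_\gamma\in K$ \emph{bounded}, not tending to zero; as a Banach space $KG$ behaves like an $\ell^\infty$-space, not like $c_0$. In particular, if you take $h_\alpha=(\mathbf H-1)^\alpha$ and write $x=\sum_\alpha n_\alpha h_\alpha$ with $n_\alpha=\sum_\gamma\lambda_{\gamma,\alpha}(\mathbf N-1)^\gamma\in KN$, then the $n_\alpha$ are uniformly bounded but do \emph{not} in general tend to $0$ (for a concrete commutative example take $x=\sum_{\alpha}(\mathbf H-1)^\alpha\in RG$). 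So the claimed identification $KG\cong KN\,\widehat\otimes_K\,KH$ with null coefficients fails, and there is no Schauder-type basis of $KH$ that would give it to you.

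This wrecks the key step: from local finiteness of the $RH$-action you only get that the scalars $c_{\alpha,i}(w)$ are \emph{bounded} (since $h_\alpha\cdot w\in RH\cdot w$, a fixed finitely generated $R$-module), not that they tend to $0$. Hence $n_i'=\sum_\alpha c_{\alpha,i}\,n_\alpha$ is an infinite sum of uniformly bounded, non-null terms and does not converge in $KN$; the equation $\sum_i n_i'y_i\in I_v$ is therefore not available, and the rest of the argument collapses. (If you additionally assumed $(H-1)\cdot B_V\subseteq pB_V$, then $c_{\alpha,i}\to 0$ and your approach would go through; but the proposition only assumes this for $N$.)

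The paper exploits precisely this asymmetry by decomposing the \emph{other} way: write $\zeta=\sum_\alpha(\mathbf N-1)^\alpha\zeta_\alpha$ with $\zeta_\alpha\in RH$. Then $\zeta_\alpha\cdot w$ lies in a fixed finite $R$-module inside $\hyp{n}\cdot v$, giving uniformly bounded scalars $\lambda_{i,\alpha}$, and the hypothesis $(N-1)\cdot B_V\subseteq pB_V$ provides the decay in $\alpha$ needed for $\zeta\cdot w=\sum_{i,\alpha}\lambda_{i,\alpha}(\mathbf N-1)^\alpha u_i\cdot v$ to converge. Crucially, $\sum_\alpha\lambda_{i,\alpha}(\mathbf N-1)^\alpha$ is then automatically a well-defined element of $KN$ (bounded coefficients in the $(\mathbf N-1)$-basis is the \emph{definition} of $KN$), so the multiplication-map hypothesis applies directly and yields $\lambda_{i,\alpha}=0$, hence $\zeta_\alpha\cdot w=0$, hence $\zeta_\alpha=0$ by faithfulness of $KH$ on $W$ --- with no need for Hahn--Banach.
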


This allows us to deduce faithfulness results for a Lie algebra $\g=\n\oplus\h$ from similar results for $\n$ and $\h$. We do this by exploiting the local finiteness conditions to replace the action of $\h$ by an action of $\hyp{n}$, thus turning the problem into a question involving only the Lie algebra $\n$.

In \cref{Sec: Metaplectic} we then apply this to the metaplectic representation. We first decompose $\g=\a\oplus\b\oplus\c$ into subalgebras $\a, \b$ and $\c$ with associated uniform pro-$p$ groups $A, B$ and $C$, where $\a$ acts locally finitely and $\b$ acts locally nilpotently, and proceed in three steps. 
\begin{itemize}
    \item In \cref{Subsec: Multiplication map} we start by generalising \cite[Theorem 3.8]{ardakov2013verma} to show that the multiplication map from the last condition of the Gluing Lemma is indeed injective for any subalgebra of $\c$. To do this we use the generalisation of the Gluing Lemma from \cref{Sec: Gluing lemma}.
    \item In \cref{Subsec: KA} we show that the second condition for the Gluing Lemma above holds, namely that the metaplectic representation is injective when restricted to $KA$. This involves exploiting symmetries of $\Weyl$ to be able to apply the Gluing Lemma as stated above. 
    \item In \cref{Subsec: Gluing} we put together the previous results to conclude that the metaplectic representation is injective. 
\end{itemize}

In \cref{Sec: Abelian subalgebras} we use primary decomposition and a result from \cite{ardakov2012prime} on ideals fixed by a particular class of action to give a different proof for abelian subalgebras of general Iwasawa algebras. We obtain the following result, where $\L$ denotes the $\Q_p$-Lie algebra associated to a uniform pro-$p$ group, as in \cite[\textsection 9.5]{ddms2003analytic}.
\begin{theorem}
    Let $G$ be a uniform pro-$p$ group, and $H \leq G$ a torsion-free abelian pro-$p$ subgroup, with associated $\Z_p$-Lie algebras $\g,\h$. Let $\n \subseteq \g$ be a subalgebra contained in the normaliser of $\h$, $N$ its associated pro-$p$ group, and assume $\g/\n$ is torsion-free. 
    
    Suppose $T$ is a filtered $K$-algebra, and $\psi: KG \to T$ is a filtered $K$-algebra homomorphism such that $\psi(KH)$ is not finite $K$-dimensional. If $\L(H)$ is an irreducible $\L(N)$-module, then $\restr{\psi}{KH}$ is injective.
\end{theorem}
In particular, this shows the faithfulness of the metaplectic representation when restricted to the abelian subalgebras $KB$ and $KC$.

The authors believe that the methods developed here can be generalised to an arbitrary highest weight module for $\sp_{2n}$ and more generally to other simple Lie algebras. 

\subsection{Acknowledgements}
The authors thank Konstantin Ardakov for his invaluable support, advice and feedback throughout this project. The first author thanks St Anne's College, Oxford for providing accommodation and funding for living expenses for the duration of this project. The second and last authors thank St John's College, Oxford for providing additional funding and accommodation for the duration of the project. The third author thanks the London Mathematical Society and the Mathematical Institute for funding this project. The last author also thanks the Mathematical Institute for funding the project.

\section{Constructing the Metaplectic Representation}\label{Sec: Introduction}
We will use lower-case letters to denote elements in a $\Z_p$-Lie algebra $\g$ and the corresponding upper-case letter for the corresponding element in its associated uniform pro-$p$ group $G\coloneqq\exp(p \g)$. It is a standard fact of Iwasawa algebras (see for example \cite[\textsection 7]{ddms2003analytic}) that a general element $\zeta\in KG$ can be uniquely written as \begin{equation*}
    \zeta = \sum_{\alpha \in \N_0^d}\lambda_\alpha (\G-1)^\alpha, \qquad \qquad (\G-1)^\alpha \coloneqq (G_1-1)^{\alpha_1}\cdots (G_d-1)^{\alpha_d},
\end{equation*} where $\left(G_1,\dots,G_d\right)$ is a topological generating set for $G$ and $\lambda_{\alpha} \in K$ are uniformly bounded with respect to the $p$-adic valuation on $K$. 

\noindent For fixed $n\ge 2$, let $$\g \coloneqq  \sp_{2n}(\Z_p)=\left\{\begin{bmatrix}
        A & B \\
        C & -A^T
    \end{bmatrix}: A, B, C\in M_n(\Z_p), B^T=B, C^T=C\right\}$$ with associated uniform pro-$p$ group $$G\coloneqq\exp(p\, \sp_{2n}(\Z_p))=\SP_{2n}(\Z_p)\cap(I+pM_{2n}(\Z_p)).$$ 
    We will use the following map adapted from \cite{folland1989harmonic} to construct an infinite dimensional $KG$-module.

\begin{proposition}[Metaplectic Representation]\label{Prop: Metaplectic}
    There is a Lie algebra homomorphism $\sp_{2n}(\Z_p)\to A_n(\Z_p)$ given by
    $$\begin{bmatrix}
        A & B \\
        C & -A^T
    \end{bmatrix}\longmapsto-\frac{1}{2}\tr(A)-\sum_{1\leq i,j\leq n}{A_{ij}x_j\del_i}+\frac{1}{2}\sum_{1\leq i,j\leq n}{B_{ij}\del_i\del_j}-\frac{1}{2}\sum_{1\leq i,j\leq n}{C_{ij}x_ix_j}.$$
\end{proposition}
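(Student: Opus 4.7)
The plan is to verify directly that the prescribed formula defines a Lie algebra homomorphism $\phi:\g\to A_n(\Z_p)$, where $\g=\sp_{2n}(\Z_p)$. Since both source and target are $\Z_p$-linear and the formula is manifestly so, it suffices to check bracket compatibility on a spanning set. The natural choice is the triangular decomposition $\g=\n^-\oplus\h\oplus\n^+$, where $\h$ consists of block-diagonal matrices $\bigl[\begin{smallmatrix}A & 0\\ 0 & -A^T\end{smallmatrix}\bigr]$ and is isomorphic to $\gl_n(\Z_p)$, while $\n^+$ (respectively $\n^-$) consists of block-upper (respectively block-lower) matrices with symmetric block $B$ (respectively $C$). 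Under $\phi$, these three summands map respectively to the mixed $x\partial$, pure $\partial\partial$, and pure $xx$ quadratic expressions in the Weyl algebra.

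Bracket compatibility then splits into six cases. The purely upper and purely lower cases $[\n^+,\n^+]$ and $[\n^-,\n^-]$ vanish on both sides by commutativity of the $\partial_i$'s (respectively of the $x_i$'s). The mixed Cartan cases $[\h,\n^\pm]$ reduce via the Weyl-algebra identities $[x_j\partial_i,\partial_k]=-\delta_{jk}\partial_i$ and $[x_j\partial_i,x_l]=\delta_{il}x_j$ to the standard $\gl_n$-action on symmetric tensors, matching conjugation of the blocks $B$ or $C$ by $A$ inside $\g$ (here the symmetry $B^T=B$, $C^T=C$ is used to symmetrise the resulting expressions). The $[\h,\h]$ case reduces via $[x_j\partial_i,x_l\partial_k]=\delta_{il}x_j\partial_k-\delta_{jk}x_l\partial_i$ to the familiar embedding $\gl_n(\Z_p)\to A_n(\Z_p)$ by vector fields; the scalar correction $-\tfrac12\tr A$ drops out harmlessly on both sides because $\tr[A,A']=0$.

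The genuinely delicate case is $[\n^+,\n^-]$. For $B=\bigl[\begin{smallmatrix}0 & B'\\0 & 0\end{smallmatrix}\bigr]$ and $C=\bigl[\begin{smallmatrix}0 & 0\\C' & 0\end{smallmatrix}\bigr]$ the bracket in $\g$ equals $\bigl[\begin{smallmatrix}B'C' & 0\\0 & -(B'C')^T\end{smallmatrix}\bigr]\in\h$. On the Weyl-algebra side, expanding $[\partial_i\partial_j,x_kx_l]$ via $[\partial_i,x_j]=\delta_{ij}$ and the Leibniz rule produces four $x\partial$-type terms together with scalar contributions $\delta_{jk}\delta_{il}+\delta_{jl}\delta_{ik}$ coming from the reorderings $\partial x=x\partial+1$. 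Multiplying by $-\tfrac14 B'_{ij}C'_{kl}$, summing over $i,j,k,l$, and using $B'^T=B'$ and $C'^T=C'$, the quadratic pieces consolidate to $-\sum(B'C')_{ij}x_j\partial_i$ while the scalar pieces combine to $-\tfrac12\tr(B'C')$. These match exactly the two summands of $\phi([B,C])$.

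The main obstacle is precisely this scalar matching in $[\n^+,\n^-]$: it is the algebraic shadow of the \emph{metaplectic anomaly} and is where both the factor $\tfrac12$ in the definition of $\phi$ and the hypothesis that $p$ is odd (so that $\tfrac12\in\Z_p$) are essential. Without the $-\tfrac12\tr A$ correction term, the bracket would close only up to a central scalar. All other brackets are routine Weyl-algebra computations, and once $[\n^+,\n^-]$ is verified the proposition follows by $\Z_p$-linearity.
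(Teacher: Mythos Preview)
Your argument is correct. The triangular decomposition $\g=\n^-\oplus\h\oplus\n^+$ reduces the verification to the six bracket types you list, and your treatment of each is accurate; in particular your computation of $[\partial_i\partial_j,x_kx_l]$ and the resulting scalar $-\tfrac12\tr(B'C')$ is exactly right, and your remark that this is where the hypothesis $p$ odd enters (so that $\tfrac12\in\Z_p$ and the map really lands in $A_n(\Z_p)$) is well placed.

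By way of comparison: the paper does not actually prove this proposition. It states the formula as ``adapted from'' Folland's \emph{Harmonic Analysis in Phase Space} and then immediately moves on, recording only the commutation relations inside $\g$ in \cref{Lmm: Commutators} for later use. So your proposal supplies precisely the direct verification the paper omits. Your block-by-block approach is essentially equivalent to checking the images of the basis elements $a_{ij},b_{ij},c_{ij}$ against the relations in \cref{Lmm: Commutators}, just packaged with general matrices $A,B,C$ rather than elementary ones; the content is the same, and either presentation would serve.
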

Here $A_n(\Z_p)$ is the $n^{th}$ Weyl algebra on generators $x_1, \ldots, x_n$ and $\del_1,\ldots, \del_n$. We will denote $\x^{\alpha}\coloneqq x_1^{\alpha_1}\cdots x_n^{\alpha_n}$ and $\del^{\beta}\coloneqq\del_1^{\beta_1}\cdots\del_n^{\beta_n}$ for all $\alpha,\beta \in \N_0^n$.

We are interested in knowing if there are any non-zero prime ideals of $KG$ of infinite codimension. To this end, we lift the above homomorphism to a map on $KG.$ We tensor first with $R$ to obtain a map of $R$-Lie algebras $\g_R\coloneqq\sp_{2n}(R)\to A_n(R)$ and lift this to a map of associative algebras $U(\g_R)\to A_n(R),$ where $U(\g_R)$ is the universal enveloping algebra.  

Now note that both $U(\g_R)$ and $A_n(R)$ inherit $p$-adic valuations from $R$. Explicitly, as $K$ is a finite $\Q_p$ extension, the valuation on $\Q_p$ can be extended to a valuation on $K,$ which we denote by $v$. Let $d=2n^2+n$ be the dimension of $\g$ over $\Z_p$ and fix a basis $g_1,\ldots, g_d$ of $\g$. We denote $\mathbf{g}^{\alpha}\coloneqq g_1^{\alpha_1}\cdots g_d^{\alpha_d}$ for $\alpha \in \N_0^d$, and write $\abs{\alpha} = \alpha_1 + \cdots + \alpha_n$. Then, for an element of $U(\g_R)$ we define its valuation by
$$v_p\left(\sum_{\alpha\in\N_0^d}c_{\alpha}\mathbf{g}^{\alpha}\right)\coloneqq\min_{\alpha\in\N_0^d}v(c_{\alpha}),$$ and similarly, for elements of $A_n(R),$
$$v_p\left(\sum_{\alpha,\beta\in\N_0^n}c_{\alpha,\beta}\x^{\alpha}\del^{\beta}\right)\coloneqq\min_{\alpha,\beta\in\N_0^n}v(c_{\alpha,\beta}).$$

We then note the algebra homomorphism is continuous with respect to the induced topologies. Therefore, this extends to a map $\wh{U(\g_R)}\to \wh{A_n(R)}$ of the $p$-adic completions. 

Then $$\wh{U(\g_R)}\coloneqq\varprojlim_{\lambda\geq 0}\faktor{U(\g_R)}{U(\g_R)_{\lambda}}=\left\{\sum_{\alpha\in \N_0^d}c_{\alpha}\mathbf{g}^{\alpha} : c_{\alpha} \in R, c_{\alpha}\to 0 \text{ as } |\alpha| \to \infty \right\}$$

with $U(\g_R)_\lambda\coloneqq\left\{x\in U(\g_R): v_p(x)\geq \lambda \right\}$ for $\lambda\in\R$, and similarly $$\wh{A_n(R)}\coloneqq\varprojlim_{\lambda \geq 0}\faktor{A_n(R)}{A_n(R)_{\lambda}}=\left\{ \sum_{\alpha,\beta\in \N_0^n}c_{\alpha,\beta}\x^{\alpha}\del^{\beta}: c_{\alpha,\beta}\in R, c_{\alpha,\beta}\to 0 \text{ as } |\alpha|+|\beta| \to \infty  \right\},$$ with $A_n(R)_{\lambda}\coloneqq\left\{y\in A_n(R): v_p(y)\geq \lambda \right\}$ for $\lambda\in\R$.

Now, we tensor with $K$ and obtain a map $\rho:\aff{g}\to \Weyl$, where $$\aff{g}\coloneqq\wh{U(\g_R)}\otimes_R K=\left\{\sum_{\alpha\in \N_0^n}c_{\alpha}\mathbf{g}^{\alpha}: c_{\alpha}\in K, c_{\alpha}\to 0 \text{ as } |\alpha| \to \infty \right\}$$ and $$\Weyl\coloneqq\wh{A_n(R)}\otimes_R K=\left\{ \sum_{\alpha,\beta\in \N_0^n}c_{\alpha,\beta}\x^{\alpha}\del^{\beta}:  c_{\alpha,\beta}\in K, c_{\alpha,\beta}\to 0 \text{ as } |\alpha|+|\beta| \to \infty  \right\}.$$
We also define $\hyp{g}\coloneqq U(\g_R)\otimes_R K\subseteq \aff{g}$. Finally, as noted in \cite[Corollary 2.5.4]{stephen2022TypeA} $KG$ embeds into $\aff{g}$ via $g\mapsto e^{pg}$ for $g\in G$, so we can restrict $\rho$ along this embedding to obtain a map $\rho|_{KG}:KG\to \Weyl$. The aim is to show that $\ker\restr{\rho}{KG}=0$.

Let us also mention the valuations defined on $U(\g_R)$ and $A_n(R)$ extend in the natural way to valuations on $\aff{g},$ and $\Weyl$ respectively. That these are indeed valuations follows from \cite[Lemma 5.2]{ardakov2013verma} together with \cite[Remark 4.6]{PeterSchneider} for $\aff{g}$ and \cite[Lemma 1.2.4]{pangalosdiss} for the Weyl algebra (and by continuity for the completed Weyl algebra as well). 

From now on, we fix the following basis for $\g$. Let $e_{ij}=[\delta_{iI}\delta_{jJ}]_{IJ}$ for $1\le i, j\le 2n$ denote the $2n\times 2n$ unit matrices. Then, for $1\le i, j\le n$ we let 
\begin{align*}
    a_{ij}&=e_{ij}-e_{j+n, i+n} & b_{ij}&=e_{i, j+n}+e_{j, i+n} & c_{ij}&=e_{i+n, j}+e_{j+n, i} \\
    \rho(a_{ij})&=-\frac{1}{2}\delta_{ij}-x_j\del_i & \rho(b_{ij})&=\del_i\del_j & \rho(c_{ij})&=-x_ix_j
\end{align*}
so note that in particular $b_{ij}=b_{ji}$ and $c_{ij}=c_{ji}$. We record here the commutation relations for later reference.

\begin{lemma}\label{Lmm: Commutators}
    For all $1\le i, j, k, l\le n$ we have 
        $$[a_{ij}, a_{kl}]=\delta_{jk}a_{il}-\delta_{il}a_{kj}, \qquad [a_{ij}, b_{kl}]=\delta_{jk}b_{il}+\delta_{jl}b_{ik}, \qquad [a_{ij}, c_{kl}]=-\delta_{il}c_{jk}-\delta_{ik}c_{jl},$$
        $$[b_{ij}, c_{kl}] = \delta_{jk}a_{il}+\delta_{jl}a_{ik}+\delta_{ik}a_{jl}+\delta_{il}a_{jk}, \qquad [b_{ij}, b_{kl}]=0, \qquad [c_{ij}, c_{kl}]=0.$$ 
\end{lemma}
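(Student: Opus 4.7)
The plan is a direct computation using the standard matrix-unit commutator identity. Recall that for the elementary $2n\times 2n$ matrices $e_{IJ}$ one has
\[
[e_{IJ},\,e_{KL}] \;=\; \delta_{JK}\,e_{IL} \;-\; \delta_{LI}\,e_{KJ},
\]
and since each of $a_{ij},b_{ij},c_{ij}$ is a sum of two matrix units, every bracket in the lemma reduces to four applications of this identity. By bilinearity of the Lie bracket, this is a finite and purely mechanical calculation; I would just expand and collect terms, using repeatedly that $b_{ij}=b_{ji}$ and $c_{ij}=c_{ji}$ to symmetrise.

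More concretely, I would treat the six cases in turn. For $[a_{ij},a_{kl}]$, write $a_{ij}=e_{ij}-e_{j+n,i+n}$ and $a_{kl}=e_{kl}-e_{l+n,k+n}$, expand into four brackets, observe that the ``mixed'' terms (where one index is in $\{1,\dots,n\}$ and the other in $\{n+1,\dots,2n\}$) vanish because the corresponding Kronecker $\delta$'s are always zero, and then repackage the surviving terms as $\delta_{jk}a_{il}-\delta_{il}a_{kj}$. For $[a_{ij},b_{kl}]$ and $[a_{ij},c_{kl}]$ the same bookkeeping applies, noting that $b$- and $c$-type matrices are supported on the off-diagonal $n\times n$ blocks so one systematically gets two surviving terms that combine into the stated expressions. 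For $[b_{ij},b_{kl}]$ and $[c_{ij},c_{kl}]$ all four terms vanish: the $b$-entries lie entirely in the upper-right block and the $c$-entries in the lower-left block, and brackets of matrix units supported in such a block are zero.

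The only case that requires a little care is $[b_{ij},c_{kl}]$, which produces four nonzero brackets of matrix units and therefore yields four $a$-terms in the final answer; here I would symmetrise using $b_{ij}=b_{ji}$, $c_{kl}=c_{lk}$ and regroup to recognise the sum $\delta_{jk}a_{il}+\delta_{jl}a_{ik}+\delta_{ik}a_{jl}+\delta_{il}a_{jk}$. No genuine obstacle arises; the main risk is a sign or index slip, so I would double-check this last identity by comparing with a small case (e.g.\ $n=1$, where $\sp_2\cong\sl_2$ and the formula must reduce to the standard $[e,f]=h$ relation up to the chosen normalisation).
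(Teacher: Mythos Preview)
Your proposal is correct and follows exactly the same approach as the paper: the paper's proof simply records the identity $[e_{IJ},e_{KL}]=\delta_{JK}e_{IL}-\delta_{LI}e_{KJ}$ and says the relations follow, whereas you spell out the block-by-block bookkeeping in more detail. There is nothing to add.
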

\begin{proof}
    For all $1\le i, j, k, l\le 2n$ we have $$[e_{ij}, e_{kl}]=\delta_{jk}e_{il}-\delta_{il}e_{kj}$$ which gives the relations above.
\end{proof}

In particular, we have the subalgebras
$$\a=\langle a_{ij}: 1\le i, j\le n\rangle_{\Z_p} \qquad \b=\langle b_{ij}: 1\le i\le j\le n\rangle_{\Z_p} \qquad \c=\langle c_{ij}: 1\le i\le j\le n\rangle_{\Z_p}$$ and denote by $A, B$ and $C$ the corresponding uniform pro-$p$ groups. We present the root space decompositions of $\sp_4$ and of $\sp_6$ as illustrative examples. In particular, it is easy to see from these that $\b$ and $\c$ are abelian. 

\tdplotsetmaincoords{62}{13}
\begin{figure}[!htb] 
    \centering
    \includegraphics[page=1,scale=1, valign=c]{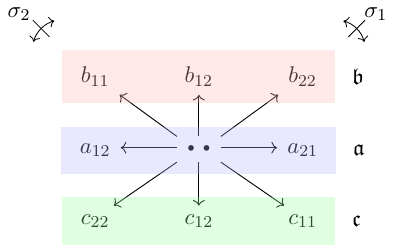} \qquad \qquad
    \includegraphics[page=2,scale=1, valign=c]{images.pdf}
\caption{Root space decomposition of $\sp_4$, and image under $\rho$}
\end{figure}

\begin{figure}[!htb]
    \centering
    \includegraphics[page=3, scale=0.74, valign=c]{images.pdf}
    \includegraphics[page=4, scale=0.74, valign=c]{images.pdf}
    \caption{Root space decomposition of $\sp_6$, and image under $\rho$}
\end{figure}

In order to translate results between the different subalgebras of $\g$ we will make use of the following Fourier transforms of $\Weyl$. For $1\le i,j\le n$, let $\tau_i: \Weyl\to \Weyl$ be the continuous extension of the automorphism on $A_n(R)_K$ given by 
\begin{align*} \tau_i(x_j) = \begin{cases} \del_j & \text{ if } i =j\\ x_j & \text{ if } i \neq j \end{cases} && \tau_i(\del_j) = \begin{cases} -x_j & \text{ if } i =j\\ \del_j & \text{ if } i \neq j, \end{cases}
\end{align*}
and note that $\tau_i$ is still an automorphism.

\begin{lemma}\label{Lmm: Weyl automorphisms}
    For every $1\le i\le n$, there is an automorphism $\sigma_i:\aff{g}\to\aff{g}$ making the following diagram commute:

    \[\begin{tikzcd}
	\aff{g} && \aff{g} \\ 
	\\
	{\wh{A_n(R)}}_K && {\wh{A_n(R)}}_K
	\arrow["\rho"', from=1-1, to=3-1]
	\arrow["\rho", from=1-3, to=3-3]
	\arrow["\sigma_i", from=1-1, to=1-3]
	\arrow["\tau_i", from=3-1, to=3-3]
\end{tikzcd}\]
    Explicitly, $$\sigma_i(a_{jk})=\begin{cases}
        -a_{ii} & \text{ if } j=k=i \\
        -c_{ik} & \text{ if } j=i, k\neq i \\
        -b_{ji} & \text{ if } j\neq i, k=i \\
        a_{jk} & \text{ otherwise }
    \end{cases}\qquad
    \sigma_i(b_{jk})=\begin{cases}
        -c_{ii} & \text{ if } j=k=i \\
        a_{ki} & \text{ if } j=i, k\neq i \\
        a_{ji} & \text{ if } j\neq i, k=i \\
        b_{jk} & \text{ otherwise }
    \end{cases}$$
    $$\sigma_i(c_{jk})=\begin{cases}
        -b_{ii} & \text{ if } j=k=i \\
        a_{ik} & \text{ if } j=i, k\neq i \\
        a_{ij} & \text{ if } j\neq i, k=i \\
        c_{jk} & \text{ otherwise. }
    \end{cases}
    $$
\end{lemma}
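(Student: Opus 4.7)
The strategy is to first define $\sigma_i$ as a $\Z_p$-linear endomorphism of $\g_R = \sp_{2n}(\Z_p)$ on the basis $\{a_{jk}, b_{jk}, c_{jk}\}$ via the stated formulas, upgrade it to a Lie algebra automorphism of $\g_R$, and then lift through the enveloping algebra and its completion. From the formulas, $\sigma_i$ visibly permutes this basis up to sign (indeed a routine check gives $\sigma_i^4 = \id$), so it is a $\Z_p$-linear bijection on $\g_R$.

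The critical verification is that $\rho(\sigma_i(X)) = \tau_i(\rho(X))$ for each basis element $X$. This reduces to short monomial computations in $\Weyl$, for example
\begin{equation*}
\tau_i(\rho(a_{ii})) = \tau_i\bigl(-\tfrac{1}{2} - x_i\del_i\bigr) = -\tfrac{1}{2} + \del_i x_i = \tfrac{1}{2} + x_i\del_i = \rho(-a_{ii}) = \rho(\sigma_i(a_{ii})),
\end{equation*}
with the remaining subcases being entirely analogous (for instance, $\tau_i(\rho(b_{ik})) = \tau_i(\del_i\del_k) = -x_i\del_k = \rho(a_{ki})$ for $k \neq i$, and $\tau_i(\rho(c_{ii})) = \tau_i(-x_i^2) = -\del_i^2 = \rho(-b_{ii})$). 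The formulas for $\sigma_i$ are engineered so that each of the twelve subcases ($4$ for each of $a, b, c$) closes in a single line.

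With the intertwining established on generators, I observe that $\rho$ is injective on $\g_R$: the elements $\rho(a_{jk})$, $\rho(b_{ij})$ (for $i \leq j$), and $\rho(c_{ij})$ (for $i \leq j$) are supported on pairwise distinct PBW monomials of the Weyl algebra (plus scalar contributions from the $a_{ii}$'s), so they are $\Z_p$-linearly independent. Combining this with the fact that $\tau_i$ is a Lie algebra automorphism of $\Weyl$ yields, for all $X, Y \in \g_R$,
\begin{equation*}
\rho(\sigma_i[X,Y]) = \tau_i[\rho(X), \rho(Y)] = [\tau_i\rho(X), \tau_i\rho(Y)] = [\rho(\sigma_i X), \rho(\sigma_i Y)] = \rho[\sigma_i X, \sigma_i Y],
\end{equation*}
and cancelling $\rho$ shows $\sigma_i$ is a Lie algebra automorphism of $\g_R$.

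By the universal property, $\sigma_i$ extends uniquely to an $R$-algebra automorphism of $U(\g_R)$; since $\sigma_i$ and $\sigma_i^{-1}$ both preserve the $R$-subalgebra $U(\g_R)$, they are isometries for the $p$-adic valuation $v_p$, so $\sigma_i$ extends continuously to $\wh{U(\g_R)}$ and, after tensoring with $K$, to the desired automorphism of $\aff{g}$. The displayed diagram commutes on all of $\aff{g}$ by continuity and $K$-linearity from its commutativity on generators. The main obstacle is thus the purely mechanical verification of $\rho\sigma_i = \tau_i\rho$ on generators; beyond the bookkeeping of the four subcases for each of $a, b, c$, there is no conceptual difficulty.
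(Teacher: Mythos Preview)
Your proposal is correct and follows essentially the same approach as the paper: both arguments rest on the injectivity of $\restr{\rho}{\g}$ together with the fact that $\tau_i$ preserves $\rho(\g)$, so that $\sigma_i$ is simply the pullback of $\tau_i$ along $\rho$, then extended by completion. The only cosmetic difference is order of presentation --- the paper defines $\sigma_i$ as the pullback and reads off the formulas, while you start from the formulas and verify they realise the pullback --- and you spell out the isometry/completion step in slightly more detail than the paper does.
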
 
\begin{proof}
    From \cref{Prop: Metaplectic}, $\tau_i$ preserves the image $\rho(\g)$, and $\restr{\rho}{\g}$ is injective, so we can pull $\restr{\tau_i}{A_n(R)}$ back to an automorphism of $\g$ and complete it to get an automorphism $\sigma_i$ of $\aff{g}$ that makes the above diagram commute. Since $\rho(\sigma_i(x))=\tau_i(\rho(x))$ for all $x\in\g$ we obtain the explicit formulas above. 
\end{proof}

We also let the total Fourier transform on $\Weyl$ be $\tau\coloneqq\tau_1\circ\cdots\circ\tau_n$ with the corresponding automorphism on $\aff{\g}$ be $\sigma\coloneqq\sigma_1\circ\cdots\circ\sigma_n$. 

\section{The Gluing Lemma}\label{Sec: Gluing lemma}
In this section we develop the main tool for proving the injectivity of $\restr{\rho}{KG}$, partly reducing the problem to showing it is injective when restricted to various subalgebras of $KG$. Note that setting $T = R$ gives \cref{Prop: Gluing I}.

\begin{proposition}[Gluing Lemma] \label{Prop: Gluing II}
    Let 
    \begin{itemize}
        \item $\n, \h$ and $\g\coloneqq\n\oplus\h$ be finite rank $\Z_p$-Lie algebras
    with corresponding uniform pro-$p$ groups $N,H$ and $G$.
        \item $T=\bigcup_{d\in\Z} F_dT$ be an associative $R$-algebra with a $\Z$-filtration by $R$-modules, and $T_K\coloneqq T\otimes_R K$.
        \item $V$ be a $\aff{g}\otimes_K T_K$-module and a $K$-Banach space, such that $\left( N-1 \right) \cdot B_V \subseteq pB_V,$ where $B_V\coloneqq\{v\in V: \norm{v}\le 1\}$.
        \item $\V$ be a subset of $V$ and for each $v\in\V$, let $W_v$ be a $\aff{h}\otimes_K T_K$-submodule of $V$ contained in $\hyp{n}\cdot v$.
        \item $V_0$ be a $KG\otimes_K T_K$-submodule of $V$ containing $W\coloneqq\sum_{v\in\V}W_v$.
    \end{itemize}
    Suppose the following conditions hold:
    \begin{itemize}
        \item For each $d\in\Z$ and $w\in W$, $RH\otimes_R F_dT\cdot w$ is finitely generated over $R$.
        \item $KH\otimes_K T_K$ acts faithfully on $W$.
        \item For every $v\in\V$, the multiplication map $$KN\otimes_K \faktor{\hyp{n}}{I_v'}\to \faktor{\aff{n}}{I_v}$$ is injective, where $I_v' \coloneqq \Ann_{\hyp{n}}(v)$ and $I_v\coloneqq \Ann_{\aff{n}}(v)$.
    \end{itemize}
    Then $V_0$ is a faithful $KG\otimes_K T_K$-module.
\end{proposition}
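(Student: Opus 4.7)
Suppose $\zeta \in KG \otimes_K T_K$ annihilates $V_0$; the goal is to show $\zeta = 0$. First, I exploit that $\g = \n \oplus \h$ as Lie algebras forces $[\n, \h] = 0$, so $N$ and $H$ commute inside $G$; combined with $G = NH$ via Baker--Campbell--Hausdorff this gives $G \cong N \times H$ topologically, whence $KG \cong KN \mathbin{\widehat\otimes}_K KH$. Fixing topological generators $N_1, \ldots, N_s$ of $N$ and writing $\mathbf{N}^\alpha \coloneqq (N_1-1)^{\alpha_1} \cdots (N_s-1)^{\alpha_s}$, $\zeta$ admits a unique expansion $\zeta = \sum_\alpha \mathbf{N}^\alpha \Psi_\alpha$ with $\Psi_\alpha \in KH \otimes_K T_K$ uniformly $p$-adically bounded. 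Because $\zeta = \sum_i \eta_i \otimes t_i$ is already a finite sum, we may take all $t_i \in F_d T_K$ for a common $d \in \Z$, and hence $\Psi_\alpha \in KH \otimes_K F_d T_K$. Throughout the argument $\hyp{n}$ commutes with $\aff{h} \otimes_K T_K$ inside $\aff{g} \otimes_K T_K$.

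Next, I evaluate $\zeta \cdot w$ for $w \in W_v \subseteq V_0$ to extract a finite-dimensional relation. Stability of $W_v$ under $\aff{h} \otimes_K T_K$ gives $\Psi_\alpha \cdot w \in W_v \subseteq \hyp{n} \cdot v$, so pick $s_{\alpha, w} \in \hyp{n}$ (unique modulo $I_v'$) with $\Psi_\alpha \cdot w = s_{\alpha, w} \cdot v$; the vanishing $\zeta \cdot w = 0$ becomes $\sum_\alpha \mathbf{N}^\alpha s_{\alpha, w} \cdot v = 0$ in $V$. To cut this down to finitely many degrees of freedom I invoke the local finiteness hypothesis: $RH \otimes_R F_d T \cdot w$ is finitely generated over $R$, so its $K$-span $L \subseteq W_v$ is finite-dimensional and contains every $\Psi_\alpha \cdot w$. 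Choosing $e_1, \ldots, e_r \in \hyp{n}$ with $\{e_j \cdot v\}_j$ a basis of $L$, expand $s_{\alpha, w} \equiv \sum_j c_{\alpha, w, j}\, e_j \pmod{I_v'}$. The uniform $p$-adic bound on $\Psi_\alpha$ (acting with uniformly bounded operator norm on the Banach module $V$) transfers to a uniform bound on the scalars $c_{\alpha, w, j} \in K$, and so $n_{w, j} \coloneqq \sum_\alpha c_{\alpha, w, j} \mathbf{N}^\alpha$ defines a genuine element of $KN$.

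Finally, rearranging the vanishing relation yields $\sum_j n_{w, j} \otimes \bar{e}_j \in \ker\bigl(KN \otimes_K \hyp{n}/I_v' \to \aff{n}/I_v\bigr)$. The injectivity hypothesis forces this kernel to be zero; since the $\bar{e}_j$ are linearly independent in $\hyp{n}/I_v'$, each $n_{w, j} = 0$, and uniqueness of the $\mathbf{N}^\alpha$-expansion in $KN$ then gives $c_{\alpha, w, j} = 0$ for all $\alpha, j$. Hence $\Psi_\alpha \cdot w = 0$ for every $v \in \V$ and $w \in W_v$, so each $\Psi_\alpha$ annihilates $W$; the faithfulness hypothesis then yields $\Psi_\alpha = 0$ for every $\alpha$, whence $\zeta = 0$. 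The main technical obstacle lies in the middle paragraph: making the completed-tensor decomposition $KG \cong KN \mathbin{\widehat\otimes}_K KH$ precise via Baker--Campbell--Hausdorff, and carefully tracking $p$-adic bounds through the action of the $\Psi_\alpha$ on $V$ so that the subspace $L$, its basis $\{e_j \cdot v\}$, and the scalars $c_{\alpha, w, j}$ can all be controlled uniformly in $\alpha$.
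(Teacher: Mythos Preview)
Your overall strategy matches the paper's exactly: expand $\zeta=\sum_\alpha(\mathbf{N}-1)^\alpha\Psi_\alpha$ with $\Psi_\alpha\in KH\otimes_K F_dT_K$, trap each $\Psi_\alpha\cdot w$ in a finite-dimensional subspace of $\hyp{n}\cdot v$, use the injectivity of the multiplication map to kill the $KN$-coefficients, and conclude via faithfulness of $W$. So the architecture is right.

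There is, however, a genuine error in your set-up. The hypothesis ``$\n,\h$ and $\g\coloneqq\n\oplus\h$ are $\Z_p$-Lie algebras'' only means the decomposition is as $\Z_p$-modules with $\n$ and $\h$ both subalgebras; it does \emph{not} force $[\n,\h]=0$. Indeed, in every application of the lemma in the paper (e.g.\ $\n=\c$, $\h=\a$, where $[a_{ij},c_{kl}]=-\delta_{il}c_{jk}-\delta_{ik}c_{jl}\neq0$) the bracket is nonzero. So your claims that $N$ and $H$ commute, that $KG\cong KN\mathbin{\widehat\otimes}_K KH$ as algebras, and that $\hyp{n}$ commutes with $\aff{h}\otimes_K T_K$, are all false in the generality needed. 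Fortunately your argument never actually \emph{uses} this commutativity: the expansion $\zeta=\sum_\alpha(\mathbf{N}-1)^\alpha\Psi_\alpha$ already follows from the standard fact that $(N_1,\ldots,N_s,H_1,\ldots,H_t)$ is an ordered topological generating set for $G$, and the identity $\mathbf{N}^\alpha\cdot(\Psi_\alpha\cdot w)=(\mathbf{N}^\alpha s_{\alpha,w})\cdot v$ is just the module action applied in sequence, needing nothing about $[\n,\h]$. You should remove the commutativity claims and justify the expansion directly.

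A second, smaller point: your appeal to ``uniformly bounded operator norm on the Banach module $V$'' to bound the $c_{\alpha,w,j}$ is shaky, since no hypothesis says $H$ or $T$ act by contractions. The paper (and you can too) argues purely algebraically: after one global rescaling, $p^{m_0}\Psi_\alpha\in RH\otimes_R F_dT$ for all $\alpha$, so $\Psi_\alpha\cdot w$ lies in $p^{-m_0}(RH\otimes_R F_dT\cdot w)$, which by hypothesis is a finitely generated $R$-module; hence it sits in $p^{-m}\langle e_j\cdot v\rangle_R$ for some $m$ independent of $\alpha$, giving the uniform bound on the coefficients directly.
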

\begin{proof}
    Let $(n_1, \ldots, n_k)$ be a basis for $\n$ and $(N_1, \ldots, N_k)$ the corresponding topological generating set for $N$. Note that it is enough to show that $V_0$ is faithful as an $RG\otimes_R T$-module. Take $\zeta \in \Ann_{RG\otimes_R T}(V_0)$ and note that we can write it as $\zeta=\sum_{\alpha \in \N_0^k}\left(\mathbf{N}-1 \right)^{\alpha}\zeta_{\alpha}$ where $\zeta_{\alpha}\in RH\otimes_R F_dT$ for some $d\in\Z$ which does not depend on $\alpha$.
    Fix $v \in \V$ and $w \in W_v$. Now since $$RH\otimes_R F_dT\cdot w\subseteq \hyp{n}\cdot v$$ is finitely generated over $R$, we can choose $u_1, \ldots, u_r\in U(\n)$ such that $$RH\otimes_R F_dT\cdot w\subseteq \langle u_i\cdot v: 1\le i\le r\rangle_K$$ and $u_1+I_v', \ldots, u_r+I_v'$ are $K$-linearly independent in $\faktor{\hyp{\n}}{I_v'}\cong \hyp{\n}\cdot v$.  Moreover, by taking common denominators for the coefficients of $u_i\cdot v$ in a finite generating set for $RH\otimes_R F_dT\cdot w$ over $R$, we see that $$RH\otimes_R F_dT\cdot w\subseteq p^{-m}\langle u_i\cdot v: 1\le i\le r\rangle_R$$ for some $m > 0$ and so we can write $$\zeta_{\alpha}\cdot w=\sum_{i=1}^r\lambda_{i, \alpha} u_i\cdot v$$
    for some $\lambda_{i, \alpha}\in K$ which are uniformly bounded in $\alpha$ and $i$. The condition $\left(N-1 \right)\cdot B_V \subseteq p B_V$ then ensures  
    $$0=\zeta\cdot w=\left[\sum_{1 \leq i \leq r}\sum_{\alpha\in\N_0^k}\lambda_{i, \alpha}(\mathbf{N}-1)^{\alpha}u_i\right]\cdot v$$ so $$\sum_{1 \leq i \leq r}\sum_{\alpha\in\N_0^k}\lambda_{i, \alpha}(\mathbf{N}-1)^{\alpha}u_i\in I_v$$ and by the injectivity of the multiplication map we have $$\sum_{1 \leq i \leq r}\left[\sum_{\alpha\in\N_0^k}\lambda_{i, \alpha}(\mathbf{N}-1)^{\alpha}\right]\otimes_K(u_i+I_v')=0.$$ Since $u_i+I_v'$ are linearly independent over $K$, this can only be the case if $$\sum_{\alpha\in\N_0^k}\lambda_{i, \alpha}(\mathbf{N}-1)^{\alpha}=0$$ in $KN$ for all $1\le i\le r$. But then $\lambda_{i, \alpha}=0$ so $\zeta_{\alpha}\cdot w=0$. By linearity this is true for all $w\in W$ and since $KH\otimes_K T_K$ acts faithfully on $W$ we get that $\zeta_{\alpha}=0$ for all $\alpha\in\N_0^k$. Then $\zeta=0$, $V_0$ is faithful as an $RG \otimes_R T$-module, and so also as a $KG\otimes_K T_K$-module.
\end{proof}

\section{Faithfulness of the Metaplectic Representation}\label{Sec: Metaplectic}

Our next goal is to show that $\restr{\rho}{KG}: KG\to \Weyl$ is injective. In order to do this, we will repeatedly apply \cref{Prop: Gluing II}  with the $K$-algebras

$$V\coloneqq K\langle X_1^{\pm}, \ldots, X_n^{\pm}\rangle=\left\{\sum_{\alpha\in\Z^n}\lambda_{\alpha} X_1^{\alpha_1} \cdots X_n^{\alpha_n}: \lambda_\alpha \in K,  \lambda_{\alpha}\to 0 \text{ as } \abs{\alpha}\to \infty\right\},$$

$$V_0\coloneqq K\langle X_1, \ldots, X_n\rangle=\left\{\sum_{\alpha\in\N_0^n}\lambda_{\alpha} X_1^{\alpha_1} \cdots X_n^{\alpha_n}: \lambda_\alpha \in K, \lambda_{\alpha}\to 0 \text{ as } \abs{\alpha}\to \infty\right\}$$
which are naturally also $\Weyl$-modules, and so also $\aff{g}$-modules along the map $\rho:\aff{g} \to \Weyl$. Here,  $|\alpha|\coloneqq |\alpha_1|+|\alpha_2|+...+|\alpha_n|$ for $\alpha \in \Z^n$.

Note that both $V$ and $V_0$ are $K$-Banach spaces with the norm induced from the $p$-adic valuation: $$v_p\left(\sum_{\alpha\in \Z^n}\lambda_{\alpha}X_1^{\alpha_1}\cdots X_n^{\alpha_n} \right)\coloneqq \inf_{\alpha \in \Z^n}v(\lambda_{\alpha})$$
and, from the definition of $\rho$, we have $g\cdot B_V \subseteq B_V,$ for any $g\in \g$ so $\left(G-1 \right) \cdot B_V \subseteq p B_V$ since $e^{pg}-1\in p\,\wh{U(\g_R)}$.

It follows from \cite[Theorem 7.3]{ardakov2013irreducible} that $\Weyl$ is a simple ring, so $V_0$ and $V$ are faithful $\Weyl$-modules. This can also be seen explicitly as follows, where $$\Ann_{\Weyl}(S) \coloneqq\left\{\zeta\in\Weyl: \zeta\cdot s=0 \text{ for all }s\in S\right\}$$ for a subset $S\subseteq V_0$.

\begin{lemma}\label{Lmm: V is faithful}
    For $K[X_1, \ldots, X_n]\subseteq V_0$ we have $\Ann_{\Weyl} K[X_1, \ldots, X_n]=0$. In particular, $V_0$ and $V$ are faithful $\Weyl$-modules.
\end{lemma}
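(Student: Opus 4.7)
The plan is to exploit the unique normal-form expansion in $\Weyl$ and test a putative annihilator on individual monomials $X^\gamma$. Any $\zeta\in\Weyl$ can be written uniquely as $\zeta = \sum_{\alpha,\beta \in \N_0^n} c_{\alpha,\beta}\,\x^\alpha\del^\beta$ with $c_{\alpha,\beta} \in K$ tending to $0$ as $|\alpha|+|\beta|\to\infty$, and the aim is to show that every coefficient must vanish if $\zeta$ kills $K[X_1,\ldots,X_n]$.

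Assume for contradiction that $\zeta\neq 0$, and pick $\beta_0$ of minimal $|\beta_0|$ for which some $c_{\alpha,\beta_0}\neq 0$ (such a minimum exists because the support is nonempty and $|\cdot|$ is $\N_0$-valued). The key computation is
$$\x^\alpha\del^\beta \cdot X^{\beta_0} \;=\; \begin{cases} \tfrac{\beta_0!}{(\beta_0-\beta)!}\,X^{\beta_0-\beta+\alpha} & \text{if } \beta \le \beta_0 \text{ componentwise,}\\ 0 & \text{otherwise.}\end{cases}$$
For any $\beta \le \beta_0$ with $\beta \neq \beta_0$ one has $|\beta| < |\beta_0|$, so by minimality $c_{\alpha,\beta}=0$ for all $\alpha$ in that range. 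Only the $\beta = \beta_0$ contributions survive, leaving
$$0 \;=\; \zeta \cdot X^{\beta_0} \;=\; \beta_0!\sum_{\alpha\in\N_0^n} c_{\alpha,\beta_0}\, X^\alpha,$$
a convergent series in $V_0$ (since $c_{\alpha,\beta_0}\to 0$ as $|\alpha|\to\infty$) all of whose monomial coefficients must then be zero. Hence $c_{\alpha,\beta_0}=0$ for all $\alpha$, contradicting the choice of $\beta_0$ and yielding $\Ann_{\Weyl}K[X_1,\ldots,X_n]=0$.

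The faithfulness of $V_0$ and $V$ then follows immediately from the inclusions $K[X_1,\ldots,X_n]\subseteq V_0\subseteq V$: any element of $\Ann_{\Weyl}V_0$ or $\Ann_{\Weyl}V$ restricts to an annihilator of $K[X_1,\ldots,X_n]$, and so is zero. I don't foresee any real obstacle; the only care needed is in ensuring the existence of a minimal $|\beta_0|$ and in using that convergent monomial series in $V_0$ have unique coefficients, both of which are built into the Banach-space definitions of $\Weyl$ and $V_0$.
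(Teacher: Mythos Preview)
Your proof is correct and follows essentially the same approach as the paper: both test a putative annihilator on monomials $X^{\gamma}$ and argue inductively on the $\beta$-index to force all coefficients $c_{\alpha,\beta}$ to vanish. The only cosmetic difference is that the paper orders the $\beta$'s lexicographically and runs a direct induction, whereas you order them by total degree $|\beta|$ and phrase it as a minimal-counterexample contradiction; your version is arguably slightly cleaner since the implication ``$\beta\le\beta_0$ componentwise and $\beta\neq\beta_0$ $\Rightarrow$ $|\beta|<|\beta_0|$'' is immediate.
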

\begin{proof}
    Take $$\zeta=\sum_{\alpha,\beta\in\N_0^n}\lambda_{\alpha,\beta}x_1^{\alpha_1}\cdots x_n^{\alpha_n}\del_1^{\beta_1}\cdots\del_n^{\beta_n}\in \Ann_{\Weyl} K[X_1, \ldots, X_n].$$
    We consider the lexicographic order $<$ on $\N_0^n$ and argue by induction on $\beta\in\N_0^n$ that $\lambda_{\alpha,\beta}=0$ for all $\alpha\in \N_0^n$. Indeed, for $\beta=0$ we have $$0=\zeta\cdot 1=\sum_{\alpha\in \N_0^n}\lambda_{\alpha,0} X_1^{\alpha_1}\cdots X_n^{\alpha_n},$$ so $\lambda_{\alpha,0}=0$ for all $\alpha \in \N_0^n$.
    More generally, note that for $\beta, \gamma\in\N_0^n$ with $\gamma< \beta$ we have $$\del_1^{\beta_1}\cdots\del_n^{\beta_n}\cdot X_1^{\gamma_1}\cdots X_n^{\gamma_n}=0.$$
    By induction, fix $\gamma\in\N_0^n$ such that for any $\beta\in\N_0^n$ with $\beta<\gamma$ we have $\lambda_{\alpha,\beta}=0$. Then
    $$\zeta \cdot X_1^{\gamma_1}\cdots X_n^{\gamma_n}=\gamma_1!\cdots\gamma_n!\sum_{\alpha \in \N_0^n}\lambda_{\alpha, \gamma}X_1^{\alpha_1}\cdots X_n^{\alpha_n},$$ so $\lambda_{\alpha,\gamma}=0$ for all $\alpha \in \N_0^n$. The conclusion then follows.
\end{proof}
In particular, whenever we have a $K$-algebra $S$ with an algebra homomorphism $\varphi: S \to \Weyl$, we have that $V_0$ is a faithful $S$-module along $\varphi$ if and only if $\varphi$ is injective.

\subsection{The Multiplication Map} \label{Subsec: Multiplication map}

In this subsection, we show that the multiplication map $KC \otimes_K \rho \left( \hyp{c} \right) \to \rho \left( \aff{c} \right)$ is injective. We do this by induction on the following subalgebras, noting $\c^n = \c$:
\begin{align*}
    \c^k &\coloneqq \langle c_{ij}: 1\le i,j\le k\rangle_{\Z_p} \oplus \langle c_{1j}: k+1\leq j \leq  n\rangle_{\Z_p}\\  \tilde{\c}^k &\coloneqq  \langle c_{ik} : 1 \leq i \leq k \rangle_{\Z_p} \oplus \langle c_{1j} : k+1 \leq j \leq n \rangle_{\Z_p} .
\end{align*}
\cref{Lmm: Zero annihilator} shows this for the $k=1$ case, by a general computation. We then find it useful to introduce the following subalgebras. Generators of the images of these subalgebras under $\rho$, as well as of $\rho(\c^k)$ and $\rho(\tilde{\c}^k)$, are found in \cref{Lmm: Automorphic images} and are shown below for $2\le k\le n$.
\begin{align*}
    \c_+^k & \coloneqq \sigma_k\cdots\sigma_n\left(\c^k\right), & 
        \c_-^k & \coloneqq \sigma_1\cdots\sigma_{k-1}\left(\c^k\right) \\
    \tilde{\c}_+^k &\coloneqq  \sigma_k \cdots \sigma_n \left( \tilde{\c}^k \right), &
        \tilde{\c}_-^k &\coloneqq \sigma_1\cdots\sigma_{k-1}\left(\tilde{\c}^k\right)
\end{align*}
\begin{figure}[!htb]
\setlength{\tabcolsep}{3pt}
\centering
    \begin{subfigure}{0.45\linewidth}
    \centering
    \begin{tabular}{cccccc}
         $x_1^2$&  $\cdots$&  $x_1x_{k-1}$&  $x_1x_k$&  $\cdots$& $x_1x_n$\\
         &  $\ddots$&  $\vdots$&  $\vdots$&  & \\
         &  &  $x_{k-1}^2$&  $x_{k-1}x_k$&  & \\[0.5em]
         &  &  &  $x_k^2$&  & \\
    \end{tabular}
    \caption{Generators of $\rho(\c^k)$}
    \end{subfigure}
    \vspace{1em}
    \begin{subfigure}{0.45\linewidth}
    \centering
    \begin{tabular}{ccc}
               $x_1x_k$&  $\cdots$& $x_1x_n$\\
               $\vdots$&  & \\
               $x_{k-1}x_k$&  & \\[0.5em]
               $x_k^2$&  & \\
    \end{tabular}
    \caption{Generators of $\rho(\tilde{\c}^k)$}
    \end{subfigure}
    \\[1ex]
    \begin{subfigure}{0.45\linewidth}
    \centering
    \begin{tabular}{cccccc}
         $x_1^2$&  $\cdots$&  $x_1x_{k-1}$&  $x_1\del_k$&  $\cdots$& $x_1\del_n$\\
         &  $\ddots$&  $\vdots$&  $\vdots$&  & \\
         &  &  $x_{k-1}^2$&  $x_{k-1}\del_k$&  & \\[0.5em]
         &  &  &  $\del_k^2$&  & \\
    \end{tabular}
    \caption{Generators of $\rho(\c_+^k)$}
    \end{subfigure}
    \vspace{1em}
    \begin{subfigure}{0.45\linewidth}
    \centering
    \begin{tabular}{ccc}
               $x_1\del_k$&  $\cdots$& $x_1\del_n$\\
               $\vdots$&  & \\
               $x_{k-1}\del_k$&  & \\[0.5em]
               $\del_k^2$&  & \\
    \end{tabular}
    \caption{Generators of $\rho(\tilde{\c}_+^k)$}
    \end{subfigure}
    \\[1ex]
    \begin{subfigure}{0.45\linewidth}
    \centering
    \begin{tabular}{cccccc}
         $\del_1^2$&  $\cdots$&  $\del_1\del_{k-1}$&  $\del_1x_k$&  $\cdots$& $\del_1x_n$\\
         &  $\ddots$&  $\vdots$&  $\vdots$&  & \\
         &  &  $\del_{k-1}^2$&  $\del_{k-1}x_k$&  & \\[0.5em]
         &  &  &  $x_k^2$&  & \\
    \end{tabular}
    \caption{Generators of $\rho(\c_-^k)$}
    \end{subfigure}
    \begin{subfigure}{0.45\linewidth}
    \centering
    \begin{tabular}{ccc}
               $\del_1x_k$&  $\cdots$& $\del_1x_n$\\
               $\vdots$&  & \\
               $\del_{k-1}x_k$&  & \\[0.5em]
               $x_k^2$&  &
    \end{tabular}
    \caption{Generators of $\rho(\tilde{\c}_-^k)$}
    \end{subfigure}
\end{figure}

Note that $\c^k_+$ is obtained from $\c^k$ by a Fourier transform that converts the basis elements in $\c^k \setminus \c^{k-1}$ into elements of $\a \oplus \b$. This allows us to apply \cref{Prop: Gluing II} with $\n = \c^{k-1}$ and $\h = \tilde{\c}_+^k$ to obtain the result. In order to verify the faithfulness condition for $KH \otimes_K T_K$ in \cref{Lmm: Induction base case}, we induct over $\tilde{\c}^k_-$ which is obtained by a total Fourier transform of $\tilde{\c}^k_+$. Specifically, we apply \cref{Prop: Gluing II} with $\n = \left\langle c_{ik} : 1 \leq i \leq n \right\rangle_{\Z_p}$, and $\h = \a \cap \c^k_-$. This concludes the proof.

We now begin by noting that the image $\rho\left(\aff{c}\right)$ embeds into $V$ by the map $x_i \mapsto X_i$, for $1 \leq i \leq n$, and under this identification, the action of $\aff{c}$ on $V$ is given by multiplication $x \cdot v = \rho(x) v$. Then since $V$ is a domain, for any non-zero $v\in V$ and subalgebra $\c' \subseteq \c$ we have that 
\begin{align*}
    \Ann_{\hyp{c'}}(v)&=\ker\restr{\rho}{\hyp{c'}}, & \Ann_{\aff{c'}}(v)&=\ker\restr{\rho}{\aff{c'}},
\end{align*}
and so in particular they are independent of the non-zero $v\in V$. There is a special class of subalgebras for which these annihilators are zero.

\begin{lemma}
\label{Lmm: Zero annihilator}
    Let $I\subseteq\{1, \ldots, n\}^2$ such that at most one of $(i,j)$ and $(j,i)$ is in $I$ for each $i,j\in\{1, \ldots, n\}$, and suppose $f=(f_1, \ldots, f_n):\N_0^I\to\N_0^n$ given by $$f_k((\alpha_{ij})_{(i, j)\in I})=\sum_{(i, j)\in I}\alpha_{ij}(\delta_{ik}+\delta_{kj})$$ is injective. Then $\ker\restr{\rho}{\aff{c'}}=0$ for $\c'=\left\langle c_{ij}: (i, j)\in I\right\rangle_{\Z_p}$ and so $$KC'\otimes_K
    \rho\left(\hyp{c'} \right)\to\rho\left(\aff{c'}\right)$$ is injective.
\end{lemma}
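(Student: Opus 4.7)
The proof naturally splits into two parts, matching the two conclusions.

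First, to establish $\ker\rho|_{\aff{c'}}=0$: the subalgebra $\c' \subseteq \c$ is abelian by \cref{Lmm: Commutators} (the $c_{ij}$'s commute among themselves), so upon fixing an ordering of $I$ every element of $\aff{c'}$ has a unique convergent expansion
$$\zeta = \sum_{\alpha \in \N_0^I} \lambda_\alpha \mathbf{c}^\alpha, \qquad \mathbf{c}^\alpha = \prod_{(i,j)\in I} c_{ij}^{\alpha_{ij}},$$
with $|\lambda_\alpha| \to 0$. The side condition that at most one of $(i,j)$ and $(j,i)$ lies in $I$ is precisely what rules out the redundancy $c_{ij}=c_{ji}$ and makes this expansion well-defined. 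Using $\rho(c_{ij})=-x_ix_j$ together with the commutativity of the $x_k$'s in the polynomial subring of $\Weyl$, a direct calculation yields $\rho(\mathbf{c}^\alpha) = (-1)^{|\alpha|}\,\x^{f(\alpha)}$. The injectivity of $f$ then forces distinct $\alpha$ to produce distinct monomials $\x^{f(\alpha)}$, and since the $\x^\gamma$ are $K$-linearly independent in $\Weyl$ (forming part of its standard basis $\{\x^\gamma \del^\eta\}$), the equation $\rho(\zeta)=0$ forces every $\lambda_\alpha$ to vanish.

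For the second assertion, the injectivity from Part 1 identifies $\rho(\aff{c'})$ with $\aff{c'}$, and under this identification the multiplication map becomes $KC' \otimes_K \hyp{c'} \to \aff{c'}$, $\xi\otimes y\mapsto \xi y$. To show this is injective, I would argue that $KC'$ and the polynomial subring $\hyp{c'}$ are $K$-linearly disjoint in $\aff{c'}$. The key is that the generators $D_{ij} := \exp(pc_{ij})-1 = pc_{ij}\cdot u_{ij}$ of $KC'$ (with $u_{ij}$ a unit in $\aff{c'}$) are related to the polynomial generators $c_{ij}$ of $\hyp{c'}$ by a non-algebraic transformation: the $p$-adic exponential $\exp(pc_{ij})$ is transcendental over the rational function field $K(c_{kl}:(k,l)\in I)$. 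This yields $KC' \cap K(c_{kl}:(k,l)\in I) = K$ inside the fraction field of $\aff{c'}$, which is exactly the linear disjointness needed for injectivity of the multiplication map. Alternatively, one can argue by induction on $|I|$ via \cref{Prop: Gluing II} applied to the decomposition $\c' = \c'' \oplus \Z_p c_{i_0 j_0}$ with $V = V_0 = \aff{c'}$, reducing the multiplication-map injectivity for $\c'$ to that for the smaller subalgebra $\c''$.

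The main obstacle lies in this second part. The first is a clean monomial calculation, but the multiplication-map injectivity is non-trivial: either the transcendence route requires justifying the $p$-adic Lindemann–Weierstrass-type fact uniformly across the generators, or the inductive Gluing Lemma approach requires a delicate choice of $\V$ and of the $\aff{h}$-submodules $W_v$ so that the local finiteness and faithfulness hypotheses of \cref{Prop: Gluing II} are satisfied at each stage.
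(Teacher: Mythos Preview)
Your first part is correct and is exactly the argument the paper gives: write $\zeta=\sum_{\alpha}\lambda_\alpha\mathbf{c}^\alpha$, compute $\rho(\mathbf{c}^\alpha)=(-1)^{|\alpha|}\x^{f(\alpha)}$, and use injectivity of $f$ to read off $\lambda_\alpha=0$.

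For the second part you are working far harder than necessary. Once $\ker\rho|_{\aff{c'}}=0$, the map in question is, as you say, identified with the multiplication map $KC'\otimes_K \hyp{c'}\to\aff{c'}$. The paper does not argue transcendence or set up an induction via the Gluing Lemma; it simply invokes \cite[Theorem~3.2]{ardakov2013verma}, which already gives the injectivity of $KN\otimes_K\hyp{n}\to\aff{n}$ in this generality. Your transcendence sketch is essentially an attempt to reprove that cited theorem from scratch, and your inductive Gluing Lemma route is both circuitous and awkward here (the Gluing Lemma \emph{consumes} multiplication-map injectivity as a hypothesis, so bootstrapping it this way, while not formally circular, is an unnecessarily heavy machine for a one-line citation). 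Neither is wrong in spirit, but both obscure the fact that the second conclusion is an immediate corollary of Part~1 together with a known result.
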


\begin{proof}
    Take $$\zeta=\sum_{\alpha\in\N_0^I} \lambda_{\alpha} \left(\prod_{(i, j)\in I}c_{ij}^{\alpha_{ij}} \right)\in\ker\restr{\rho}{\aff{c'}}$$ for some $\lambda_{\alpha}\in K$ uniformly bounded and note that by construction $$0=\rho(\zeta)=\sum_{\alpha\in\N_0^I}(-1)^{|\alpha|}\lambda_{\alpha} \mathbf{x}^{f(\alpha)}.$$ Since $f$ is injective, then $\lambda_{\alpha}=0$ for all $\alpha\in\N_0^I$ and the first part follows. Finally, by \cite[Theorem 3.2]{ardakov2013verma} we have that $KC'\otimes_K\hyp{c'}\to \aff{c'}$ is injective so the last part also follows.
\end{proof}

Recall the following subalgebras for $1\le k\le n$, introduced at the beginning of the section. 
\begin{align*}
        \c^k&\coloneqq \langle c_{ij}: 1\le i,j\le k\rangle_{\Z_p} \oplus \langle c_{1j}: k+1\leq j \leq  n\rangle_{\Z_p}, & 
        \c_+^k& \coloneqq \sigma_k\cdots\sigma_n\left(\c^k\right), &
        \c_-^k&\coloneqq \sigma_1\cdots\sigma_{k-1}\left(\c^k\right)\\
        \tilde{\c}^k&\coloneqq  \langle c_{ik} : 1 \leq i \leq k \rangle_{\Z_p} \oplus \langle c_{1j} : k+1 \leq j \leq n \rangle_{\Z_p}, &
        \tilde{\c}_+^k&\coloneqq  \sigma_k \cdots \sigma_n \left( \tilde{\c}^k \right), &
        \tilde{\c}_-^k&\coloneqq \sigma_1\cdots\sigma_{k-1}\left(\tilde{\c}^k\right)
    \end{align*} 
    Note that these are all abelian by \cref{Lmm: Commutators}, and also that $\tilde{\c}^k\subseteq\c^k$, $\tilde{\c}_+^k\subseteq\c_+^k$, $\tilde{\c}_-^k\subseteq\c_-^k$ and $\c^n = \c$. 

\begin{lemma}\label{Lmm: Automorphic images}
    We have 
    \begin{align*}
        \c_+^k&=\begin{cases}\langle b_{1i}: 1 \leq i \leq n \rangle_{\Z_p} & \text{ if }k=1,\\ \langle c_{ij}: 1\le i, j\le k-1\rangle_{\Z_p} \oplus\langle a_{ki} : 1 \leq i \leq k-1 \rangle_{\Z_p} \oplus \langle a_{i1} : k+1 \leq i \leq n \rangle_{\Z_p} \oplus \langle b_{kk}\rangle_{\Z_p} & \text{ if } 2\le k\le n;\end{cases}\\
        \c_-^k&=\begin{cases}\langle c_{1i}: 1 \leq i \leq n \rangle_{\Z_p} & \text{ if }k=1,\\ \langle b_{ij}: 1\le i, j\le k-1\rangle_{\Z_p} \oplus\langle a_{ik} : 1 \leq i \leq k-1 \rangle_{\Z_p} \oplus \langle a_{1i} : k+1 \leq i \leq n \rangle_{\Z_p} \oplus \langle c_{kk}\rangle_{\Z_p} & \text{ if } 2\le k\le n;\end{cases}\\
        \tilde{\c}_+^k&=\begin{cases}\langle b_{1i}: 1 \leq i \leq n \rangle_{\Z_p} & \text{ if }k=1,\\ \langle a_{ki} : 1 \leq i \leq k-1 \rangle_{\Z_p} \oplus \langle a_{i1} : k+1 \leq i \leq n \rangle_{\Z_p} \oplus \langle b_{kk}\rangle_{\Z_p} & \text{ if } 2\le k\le n;\end{cases}\\
        \tilde{\c}_-^k&=\begin{cases}\langle c_{1i}: 1 \leq i \leq n \rangle_{\Z_p} & \text{ if }k=1,\\ \langle a_{ik} : 1 \leq i \leq k-1 \rangle_{\Z_p} \oplus \langle a_{1i} : k+1 \leq i \leq n \rangle_{\Z_p} \oplus \langle c_{kk}\rangle_{\Z_p} & \text{ if } 2\le k\le n.\end{cases}\\
    \end{align*} 
\end{lemma}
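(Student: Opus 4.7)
The argument is a direct term-by-term computation using the explicit formulas for $\sigma_i$ acting on $a_{jk}$, $b_{jk}$, $c_{jk}$ from \cref{Lmm: Weyl automorphisms}. I would treat each of the four equalities separately, splitting each into the boundary case $k=1$ (where $\sigma_1\cdots\sigma_{k-1}$ is the empty product, immediately giving $\c_-^1 = \c^1$ and $\tilde{\c}_-^1 = \tilde{\c}^1$) and the generic case $2 \le k \le n$.

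The key organizing observation is that each $\sigma_l$ modifies a basis element $x_{jk} \in \{a_{jk}, b_{jk}, c_{jk}\}$ only when $l \in \{j, k\}$, and otherwise fixes it. So for each generator of $\c^k$ or $\tilde{\c}^k$, I identify the indices $l$ appearing in $\sigma_k \cdots \sigma_n$ or $\sigma_1 \cdots \sigma_{k-1}$ that actually touch the generator, apply the formula from \cref{Lmm: Weyl automorphisms} at each such step, and check that the other $\sigma_l$'s fix the intermediate expressions.

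Concretely, for $\c_+^k$ with $k \ge 2$, the generators of $\c^k$ split into three groups. (i) For $c_{ij}$ with $i, j \le k-1$: none of $\sigma_k, \ldots, \sigma_n$ touches the element, so the image is $c_{ij}$. (ii) For $c_{ij}$ with $k \in \{i, j\}$: $\sigma_n, \ldots, \sigma_{k+1}$ fix the element, and then $\sigma_k$ sends $c_{ik} \mapsto a_{ki}$ for $i < k$ and $c_{kk} \mapsto -b_{kk}$. (iii) For $c_{1j}$ with $k+1 \le j \le n$: among $\sigma_k, \ldots, \sigma_n$ only $\sigma_j$ touches the element; one checks that $\sigma_n, \ldots, \sigma_{j+1}$ fix $c_{1j}$, that $\sigma_j(c_{1j}) = a_{j1}$, and that $\sigma_{j-1}, \ldots, \sigma_k$ all fix $a_{j1}$ since $1 \neq l \neq j$ for $k \le l < j$ (using $k \ge 2$). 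Collecting these outputs gives the claimed description of $\c_+^k$.

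The remaining equalities are analogous. Since $\tilde{\c}^k$ is spanned by a subfamily of the generators of $\c^k$, the descriptions of $\tilde{\c}_+^k$ and $\tilde{\c}_-^k$ follow immediately by restricting the above case analysis. For $\c_-^k$ with $k \ge 2$ one instead applies $\sigma_{k-1}, \sigma_{k-2}, \ldots, \sigma_1$ in order: $c_{kk}$ is fixed throughout; $c_{ik}$ with $i < k$ is fixed by $\sigma_{k-1}, \ldots, \sigma_{i+1}$ and sent to $a_{ik}$ by $\sigma_i$; $c_{ij}$ with $i \le j < k$ is sent to $a_{ji}$ by $\sigma_j$ and then to $-b_{ji}$ by $\sigma_i$ (with the degenerate case $i = j$ covered directly by $\sigma_i(c_{ii}) = -b_{ii}$); and $c_{1j}$ with $j > k$ is fixed until $\sigma_1(c_{1j}) = a_{1j}$. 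Signs are irrelevant for $\Z_p$-spans, and combined with the symmetries $b_{ij} = b_{ji}$, $c_{ij} = c_{ji}$ this yields the stated generating sets. There is no real obstacle here beyond bookkeeping; the only care required is tracking the composition order and remembering these symmetry relations.
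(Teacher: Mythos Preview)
Your proposal is correct and follows essentially the same approach as the paper: a direct generator-by-generator computation using the explicit formulas for $\sigma_i$ from \cref{Lmm: Weyl automorphisms}, organized by which $\sigma_l$ actually touch a given generator. The paper's proof is presented as explicit arrow chains (e.g.\ $c_{1i}\xmapsto{\sigma_n}\cdots\xmapsto{\sigma_{i+1}}c_{1i}\xmapsto{\sigma_i}a_{i1}\xmapsto{\sigma_{i-1}}\cdots$), but the content and case split are identical to yours.
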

\begin{proof}
    For $k=1$ by definition we have $$\c^1=\tilde{\c}^1=\c_-^1=\tilde{\c}_-^1=\langle c_{1i}: 1\le i\le n\rangle_{\Z_p}$$ and by \cref{Lmm: Weyl automorphisms} we have \begin{align*}
        c_{11}&\xmapsto{\sigma_n}\cdots\xmapsto{\sigma_2} c_{11}\xmapsto{\sigma_1} -b_{11}, & \\ c_{1i}&\xmapsto{\sigma_n}\cdots\xmapsto{\sigma_{i+1}}c_{1i}\xmapsto{\sigma_i}a_{i1}\xmapsto{\sigma_{i-1}}\cdots\xmapsto{\sigma_2}a_{i1}\xmapsto{\sigma_1} -b_{1i} &\text{ if } 2\le i\le n,
    \end{align*}
    giving the desired equalities for $\c_+^1$ and $\tilde{\c}_+^1.$
    
    Now for $2\le k\le n$, by \cref{Lmm: Weyl automorphisms} we have \begin{align*}
        c_{ij}&\xmapsto{\sigma_n}\cdots\xmapsto{\sigma_k} c_{ij} &\text{ if } 1\le i, j\le k-1,\\
        c_{ki}&\xmapsto{\sigma_n}\cdots\xmapsto{\sigma_{k-1}} c_{ki}\xmapsto{\sigma_k} a_{ki} &\text{ if } 1\le i\le k-1 \\
        c_{kk}&\xmapsto{\sigma_n}\cdots\xmapsto{\sigma_{k-1}} c_{kk}\xmapsto{\sigma_k} -b_{kk} & \\      c_{1i}&\xmapsto{\sigma_n}\cdots\xmapsto{\sigma_{i+1}}c_{1i}\xmapsto{\sigma_i}a_{i1}\xmapsto{\sigma_{i-1}}\cdots\xmapsto{\sigma_k} a_{i1} &\text{ if } k+1\le i\le n
    \end{align*}
    giving the desired equalities for $\c_+^k$ and $\tilde{\c}_+^k$. Moreover
    \begin{align*}
        c_{ij}&\xmapsto{\sigma_{k-1}}\cdots\xmapsto{\sigma_{j+1}}c_{ij}\xmapsto{\sigma_j}a_{ji}\xmapsto{\sigma_{j-1}}\cdots\xmapsto{\sigma_{i+1}}a_{ji}\xmapsto{\sigma_i}-b_{ij}\xmapsto{\sigma_{i-1}}\cdots\xmapsto{\sigma_1}-b_{ij} &\text{ if } 1\le i< j\le k-1,\\
        c_{ii}&\xmapsto{\sigma_{k-1}}\cdots\xmapsto{\sigma_{i+1}}c_{ii}\xmapsto{\sigma_i}-b_{ii}\xmapsto{\sigma_{i-1}}\cdots\xmapsto{\sigma_1}-b_{ii} &\text{ if } 1\le i\le k-1,\\
        c_{ki}&\xmapsto{\sigma_{k-1}}\cdots\xmapsto{\sigma_{i+1}}c_{ki}\xmapsto{\sigma_i}a_{ik}\xmapsto{\sigma_{i-1}}\cdots\xmapsto{\sigma_1} a_{ik} &\text{ if } 1\le i\le k-1 \\
        c_{kk}&\xmapsto{\sigma_{k-1}}\cdots\xmapsto{\sigma_1} c_{kk} & \\      c_{1i}&\xmapsto{\sigma_{k-1}}\cdots\xmapsto{\sigma_2}c_{1i}\xmapsto{\sigma_1}a_{1i} &\text{ if } k+1\le i\le n
    \end{align*}
    giving the equalities for $\c_-^k$ and $\tilde{\c}_-^k$.
\end{proof}

\begin{lemma}\label{Lmm: Induction base case}
    For $2\le k\le n$, the map $K\tilde{C}_+^k\otimes_K \rho\left(U\left(\c_+^k\right)_K\right)\to \Weyl$ is injective.
\end{lemma}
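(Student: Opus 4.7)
The plan is to apply the Gluing Lemma (\cref{Prop: Gluing II}) to the direct-sum decomposition $\c_+^k = \langle c_{ij} : 1 \le i, j \le k-1\rangle_{\Z_p} \oplus \tilde{\c}_+^k$ visible in the generator diagrams after \cref{Lmm: Automorphic images}. By \cref{Lmm: Commutators} the first summand is abelian and all cross-brackets with $\tilde{\c}_+^k$ vanish, so this is a direct sum of abelian Lie algebras. I set $\n = \tilde{\c}_+^k$, $\h = \langle c_{ij} : 1 \le i, j \le k-1\rangle_{\Z_p}$, take $T = \rho(U(\c_+^k)_R)$ filtered by total polynomial degree (so $T_K = \rho(U(\c_+^k)_K)$), and let $V = K\langle X_1^{\pm}, \ldots, X_n^{\pm}\rangle$, which becomes a $\wh{U(\c_+^k)}_K \otimes_K T_K$-module via $\rho$ and multiplication inside the commutative subring $\rho(\wh{U(\c_+^k)}_K) \subseteq \Weyl$. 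The conclusion that $V_0$ is faithful as a $KC_+^k \otimes_K T_K$-module will then restrict along the inclusion $K\tilde{C}_+^k \hookrightarrow KC_+^k$ to yield the desired injectivity into $\Weyl$.

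The key input is the third hypothesis of the Gluing Lemma. After conjugating by $\sigma_k \circ \cdots \circ \sigma_n$ from \cref{Lmm: Weyl automorphisms}, I reduce to working with $\tilde{\c}^k \subseteq \c$: the index set $I(\tilde{\c}^k) = \{(i, k) : 1 \le i \le k\} \cup \{(1, j) : k+1 \le j \le n\}$ contains at most one of $(i, j)$ and $(j, i)$ for each pair, and a direct calculation shows the associated map $f$ is injective---the exponents outside positions $1$ and $k$ immediately recover $\alpha_{lk}$ for $2 \le l \le k-1$ and $\alpha_{1j}$ for $j > k$, and then $f_1$ and $f_k$ determine the remaining entries $\alpha_{1k}$ and $\alpha_{kk}$. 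By \cref{Lmm: Zero annihilator} this forces $\rho$ to restrict injectively to $\wh{U(\tilde{\c}_+^k)}_K$. I take $\V = \{X_1^{-M}\cdots X_n^{-M} : M \gg 0\}$; rewriting a general PBW element of $\rho(U(\tilde{\c}_+^k)_K)$ in Weyl form as $x_1^{\alpha_1 + \sum_{j > k} \beta_j} x_2^{\alpha_2}\cdots x_{k-1}^{\alpha_{k-1}} \del_k^{\sum_i \alpha_i + 2c}\prod_{j > k}\del_j^{\beta_j}$, an explicit computation shows that the image of $v$ is a nonzero Laurent monomial whose exponent tuple uniquely recovers $(\alpha, \beta, c)$ (the factorial prefactors never vanish since $-M$ is never a non-negative integer). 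Consequently $\Ann_{\wh{U(\tilde{\c}_+^k)}_K}(v) = 0$, and the third hypothesis reduces to \cite[Theorem 3.2]{ardakov2013verma}.

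For each such $v$, I would take $W_v$ to be a $\wh{U(\h)}_K \otimes_K T_K$-submodule of $V$ contained in $U(\tilde{\c}_+^k)_K \cdot v$, chosen so that $W = \sum_v W_v$ contains a polynomial submodule like $K[X_1, \ldots, X_n]$; the second hypothesis of the Gluing Lemma then follows from \cref{Lmm: V is faithful}, while the first (local finiteness) follows because $F_d T$ has bounded $R$-rank and $H$ acts on $V$ via $p$-adically convergent exponentials of multiplication by polynomials in the $x_i x_j$ for $i, j < k$, which preserves the natural Tate-algebra filtration. The main obstacle will be arranging the containment $W_v \subseteq U(\tilde{\c}_+^k)_K \cdot v$ compatibly with $\wh{U(\h)}_K \otimes_K T_K$-closedness: the algebra $\rho(\h)$ generically shifts $v$ outside the $\tilde{\c}_+^k$-orbit, so resolving this will likely require enlarging $\V$ to include Laurent monomials with independently varying exponents on the $X_i$ for $i < k$, and then cutting each $W_v$ down to a suitable $\wh{U(\h)}_K$-stable subspace.
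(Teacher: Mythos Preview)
Your decomposition $\c_+^k = \h \oplus \n$ with $\h = \langle c_{ij} : 1 \le i,j \le k-1\rangle_{\Z_p}$ and $\n = \tilde{\c}_+^k$ is correct as a direct sum of commuting abelian subalgebras, and your verification of the third hypothesis of \cref{Prop: Gluing II} via \cref{Lmm: Zero annihilator} is fine. The fatal problem is the \emph{first} hypothesis: local finiteness of $RH \otimes_R F_dT$ on $W$.

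Under $\rho$, each $c_{ij} \in \h$ acts on $V$ as multiplication by $-X_iX_j$, so $H_{ij}-1$ acts as multiplication by $\exp(-pX_iX_j)-1$. For any nonzero monomial $w \in W$ the elements $(H_{ij}-1)^m \cdot w$ for $m \ge 0$ have distinct leading monomials (the lowest-degree term of $(H_{ij}-1)^m \cdot w$ is $(-p)^m (X_iX_j)^m w$) and are therefore $K$-linearly independent. Hence $RH \cdot w$ already has infinite $K$-rank and cannot be finitely generated over $R$. Preserving the Gauss-norm filtration on the Tate algebra is much weaker than local finiteness: the latter forces $RH \cdot w$ to lie in a set of \emph{bounded polynomial degree} with bounded valuation, not merely in the unit ball. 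Every application of the Gluing Lemma in the paper takes $\h$ to act by operators of degree $\le 0$ on $V$; your $\h$ acts in degree $+2$, and swapping the roles of $\n$ and $\h$ makes the second hypothesis circular.

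The paper avoids this by choosing a genuinely different splitting. After passing via the total Fourier transform $\sigma$ to $K\tilde{C}_-^k \otimes_K \rho(U(\c_-^k)_K)$, it applies \cref{Prop: Gluing II} with $\n = \langle c_{ik} : 1 \le i \le n\rangle_{\Z_p}$ and $\h = \a_-^k = \a \cap \c_-^k$, so that $\rho(\h)$ consists of the degree-zero operators $x_k\del_i$ and $x_j\del_1$. The price is that the second hypothesis---faithfulness of $KH \otimes_K T_K$ on $W$---is no longer a formality: the paper spends two preliminary steps on it, first showing directly that $\wh{U(\a_+^k)}_K$ acts faithfully on $K[X_1,\ldots,X_n]$, then bootstrapping to $KA_+^k \otimes_K K[x_1,\ldots,x_{k-1},\del_k,\ldots,\del_n]$ by rewriting monomials in the $x_i$ and $\del_j$ as products of the degree-zero generators times a controlled power of $X_1$. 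Your appeal to \cref{Lmm: V is faithful} for the second hypothesis would be insufficient even if local finiteness held, since that lemma only gives faithfulness of $\Weyl$ on polynomials; you would still owe a proof that $KH \otimes_K T_K \to \Weyl$ is injective, which is essentially of the same order of difficulty as the target statement.
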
 
\begin{proof}
    Fix $2\le k\le n$ and let 
    \begin{align*}
        \a_+^k&\coloneqq\a\cap\c_+^k=\a\cap\tilde{\c}_+^k=\langle a_{ki}: 1\le i\le k-1\rangle_{\Z_p} \oplus\langle a_{i1}: k+1\le i\le n \rangle_{\Z_p},\\
        \a_-^k&\coloneqq\a\cap\c_-^k=\a\cap\tilde{\c}_-^k=\langle a_{ik}: 1\le i\le k-1\rangle_{\Z_p} \oplus\langle a_{1i}: k+1\le i\le n \rangle_{\Z_p},
    \end{align*}
    
    where the equalities follow by \cref{Lmm: Automorphic images}.
    We proceed in three steps.
    
    \noindent\textbf{Step 1:} $\Ann_{\widehat{U\left(\a_+^k\right)}_K} K[X_1, \ldots, X_n]=0$. 
    
    First note the action is well-defined since $\a_+^k$ acts by homogeneous operators of degree zero. Now take $$\xi=\sum_{\alpha, \beta} \mu_{\alpha, \beta}a_{k1}^{\alpha_1}\ldots a_{k, k-1}^{\alpha_{k-1}}a_{k+1, 1}^{\beta_{k+1}}\ldots a_{n1}^{\beta_n}\in \Ann_{\widehat{U(\a_+^k)}_K} K[X_1, \ldots, X_n]$$ 
    where the sum is over $\alpha=(\alpha_1, \ldots, \alpha_{k-1})\in\N_0^{k-1}$ and $\beta=(\beta_{k+1}, \ldots, \beta_n)\in\N_0^{n-k}$ and $\mu_{\alpha, \beta}\in K$ satisfy $\mu_{\alpha, \beta}\to 0$ as $|\alpha|+|\beta|\to\infty$; in particular, they are uniformly bounded. Recall that $\rho(a_{ij})=-x_j\del_i$ so for any $\gamma=(\gamma_1, \ldots, \gamma_n)\in\N_0^n$ we have 
    $$0=\xi\cdot X_1^{\gamma_1}\cdots X_n^{\gamma_n}=\sum_{\alpha, \beta}c_{\alpha, \beta}^{(\gamma)}X_1^{\gamma_1+\alpha_1+|\beta|}X_2^{\gamma_2 + \alpha_2}\cdots X_{k-1}^{\gamma_{k-1}+\alpha_{k-1}}X_k^{\gamma_k-|\alpha|}X_{k+1}^{\gamma_{k+1}-\beta_{k+1}}\cdots X_n^{\gamma_n-\beta_n},$$
    where $$c_{\alpha, \beta}^{(\gamma)}=(-1)^{|\alpha|+|\beta|}\mu_{\alpha, \beta}\frac{\gamma_k!\cdots\gamma_n!}{(\gamma_k-|\alpha|)!(\gamma_{k+1}-\beta_{k+1})!\cdots (\gamma_n-\beta_n)!}$$
    whenever $|\alpha|\le \gamma_k$ and $\beta_j\le \gamma_j$ for $k+1\le j\le n$, and $c_{\alpha, \beta}^{(\gamma)}=0$ otherwise. Since 
    $$(\alpha, \beta)\in\N_0^{n-1}\mapsto (\gamma_1+\alpha_1+|\beta|, \alpha_2+\gamma_2, \ldots, \alpha_{k-1}+\gamma_{k-1}, \gamma_k-|\alpha|, \gamma_{k+1}-\beta_{k+1}, \ldots, \gamma_n-\beta_n)\in\N_0^n$$ is injective for any $\gamma\in\N_0^n$ we get that $c_{\alpha, \beta}^{(\gamma)}=0$ for all $\alpha\in\N_0^{k-1}, \beta\in\N_0^{n-k}$ and $\gamma\in\N_0^n$. Then $\mu_{\alpha, \beta}=0$ whenever $|\alpha|\le \gamma_k$ and $\beta_j\le \gamma_j$ for all $k+1\le j\le n$. Since $\gamma$ is arbitrary we get $\xi=0$. 

    \noindent\textbf{Step 2:} $K[X_1, \ldots, X_n]$ is faithful as a $KA_+^k\otimes_K K[x_1, \ldots, x_{k-1}, \del_k, \ldots, \del_n]$-module. 
    
    Take $$\zeta=\sum_{\alpha, \beta}\zeta_{\alpha, \beta} x_1^{\alpha_1}\cdots x_{k-1}^{\alpha_{k-1}}\del_k^{\beta_k}\cdots\del_n^{\beta_n}\in\Ann_{KA_+^k\otimes_K K[x_1, \ldots, x_{k-1}, \del_k, \ldots, \del_n]}K[X_1, \ldots, X_n]$$
    where the sum is over $\alpha=(\alpha_1, \ldots, \alpha_{k-1})\in\N_0^{k-1}$ and $\beta=(\beta_k, \ldots, \beta_n)\in\N_0^{n-k+1}$, with only finitely many $\zeta_{\alpha, \beta}\in KA_+^k$ non-zero. In particular, there are $d_1, d_2\in\N_0$ such that $\zeta_{\alpha, \beta}=0$ whenever $|\alpha|\ge d_1$ or $|\beta|\ge d_2$.
    Now for any monomial $f\in K[X_1, \ldots, X_n]$ we have
    $$0=X_1^{d_1+d_2}\del_k^{d_1}\zeta\cdot f=\sum_{\alpha, \beta} (-1)^{d_1+\abs{\beta}} \zeta_{\alpha, \beta}a_{k1}^{\beta_k+d_1-(\alpha_2+\cdots+\alpha_{k-1})}a_{k2}^{\alpha_2}\cdots a_{k, k-1}^{\alpha_{k-1}} a_{k+1, 1} ^{\beta_{k+1}}\cdots a_{n, 1}^{\beta_n} \cdot X_1^{d_2+|\alpha|-|\beta|}f.$$ 
    Fix $d \geq -d_2$. Since $\a_+^k$ acts by homogeneous operators of degree $0$, looking at the terms of total degree $d+d_2+\deg(f)$ in $X_1^{d_1+d_2}\partial_k^{d_1} \zeta \cdot f$ we see that 
    $$0=X_1^{d_2+d}\left(\sum_{|\alpha|-|\beta|=d}(-1)^{d_1+\abs{\beta}} \zeta_{\alpha, \beta}a_{k1}^{\beta_k+d_1-(\alpha_2+\cdots+\alpha_{k-1})}a_{k2}^{\alpha_2}\cdots a_{k, k-1}^{\alpha_{k-1}} a_{k+1, 1} ^{\beta_{k+1}}\cdots a_{n, 1}^{\beta_n}\cdot f \right).$$
    But since $K[X_1, \ldots, X_n]$ is a domain and $f$ is an arbitrary monomial we have that 
    $$\sum_{|\alpha|-|\beta|=d}(-1)^{d_1+\abs{\beta}} \zeta_{\alpha, \beta}a_{k1}^{\beta_k+d_1-(\alpha_2+\cdots+\alpha_{k-1})}a_{k2}^{\alpha_2}\cdots a_{k, k-1}^{\alpha_{k-1}} a_{k+1, 1} ^{\beta_{k+1}}\cdots a_{n, 1}^{\beta_n}\in\Ann_{\widehat{U(\a_+^k)}_K} K[X_1, \ldots, X_n]=0.$$ Finally by \cite[Theorem 3.2]{ardakov2013verma} we get $\zeta_{\alpha, \beta}=0$ whenever $|\alpha|-|\beta|=d$. Since $d$ is arbitrary we get $\zeta=0$.

    \noindent\textbf{Step 3:} $V_0$ is faithful as a $K\tilde{C}_+^k\otimes_K \rho(U(\c_+^k)_K)$-module.

    By applying the automorphism $\sigma\otimes_K\tau$ it is enough to show that $V_0$ is faithful as a $K\tilde{C}_-^k\otimes_K \rho(U(\c_-^k)_K)$-module.
    
    We apply \cref{Prop: Gluing II} with
    \begin{align*}
        \n &= \langle c_{ik}: 1\le i\le n\rangle_{\Z_p}, & \h &=
        \a_-^k, \\
        T & = \rho\left(U\left(\c_-^k\otimes_{\Z_p} R\right)\right), & \V &= \left\{X_k^{-2nr}: r\in \N_0\right\} \subseteq V,
    \end{align*}
     where we filter $T\subseteq A_n(R)$ by total degree. Also note that $\n \oplus \h $ is a subalgebra since by \cref{Lmm: Commutators} we have \begin{align*}
         [c_{ik}, a_{jk}]&= \delta_{ij}c_{kk} \text{ for } 1\le i\le n, \, 1\le j\le k-1 \\
         [c_{ik}, a_{1j}]&= \delta_{i1}c_{jk} \text{ for } 1\le i\le n, \, k+1\le j\le n
     \end{align*} 
    and that $\tilde{\c}_-^k\subseteq \n \oplus \h$ by \cref{Lmm: Automorphic images}. For $v_r=X_k^{-2nr}\in \V$ let
    \begin{equation*}
        W_{v_r} \coloneqq  \langle X_1^{\alpha_1} \cdots X_n^{\alpha_n} : \alpha \in \N_0^n,\, \alpha_1, \ldots, \alpha_{k-1}\leq r,\, \alpha_1+\alpha_i\le r\text{ for } k+1\le i\le n, \, |\alpha| \text{ even} \rangle_K \subseteq \hyp{\n} \cdot X_k^{-2nr}
    \end{equation*}
    where the inclusion follows since 
    $$X_1^{\alpha_1}\cdots X_n^{\alpha_n}= (-1)^{nr+\frac{\abs{\alpha}}{2}}
        c_{1k}^{\alpha_1}\cdots c_{k-1, k}^{\alpha_{k-1}}c_{k k}^{\alpha_k+nr-\frac{\abs{\alpha}}{2}}c_{k+1, k}^{\alpha_{k+1}}\cdots c_{nk}^{\alpha_n}\cdot X_k^{-2nr}$$ for $\alpha=(\alpha_1, \ldots,\alpha_n)\in\N_0^n$ with $|\alpha|$ even and $\alpha_1, \ldots, \alpha_{k-1}\leq r$, $\alpha_1+\alpha_i\le r$ for $k+1\le i\le n$, since in particular $|\alpha|\le 2nr$.
    Each $W_{v_r}$ is both a $\aff{h}$-submodule since it is stable by the actions of $x_k\del_1, \ldots, x_k\del_{k-1}$ and $x_{k+1}\del_1, \ldots, x_n\del_1$, and a $T_K$-submodule as it is also stable under the action of $\del_i\del_j$ for $1\le i, j\le k-1$ and under the action of $x_k^2$. Then $W = \langle X_1^{\alpha_1}\cdots X_n^{\alpha_n}: |\alpha|\text{ even}\rangle_K$ and we have
    \begin{itemize}
        \item For $f\in W$ we have $$RH \otimes_R F_dT \cdot f \subseteq \left\{ g \in K[X_1,\ldots,X_n] : \deg g \leq d + \deg f,\, v_p(g) \geq v_p(f) \right\},$$
        which is finitely generated as an $R$-module, hence so is $RH \otimes_R F_dT \cdot f$ as $R$ is noetherian.
        \item $KH\otimes_K T_K$ acts faithfully on $W$ since if $\zeta\in KH\otimes_K T_K$ annihilates $W$, it also annihilates $K[X_1, \ldots, X_n]=W+\del_1\cdot W$ since $\del_1$ commutes with the image of $KH\otimes_K T_K$ in $\Weyl$, and then $\zeta=0$ by Step 2, after applying the automorphism $\sigma\otimes_K\tau$.
        \item For every $v\in\V$, the multiplication map  $KN \otimes_K \rho \left( \hyp{n} \right) \to \rho \left( \aff{n} \right)$ is injective by \cref{Lmm: Zero annihilator} as $$(\alpha_{1k}, \ldots, \alpha_{nk})\in\N_0^n\mapsto (\alpha_{1k}, \ldots, \alpha_{k-1, k}, \alpha_{kk}+|\alpha|, \alpha_{k+1, k}, \ldots, \alpha_{nk})\in\N_0^n$$ is injective.
    \end{itemize}
    The result now follows from \cref{Prop: Gluing II} 
\end{proof}

\begin{proposition}\label{Prop: Multiplication map}
    The multiplication map $KC \otimes_K \rho \left( \hyp{c} \right) \to \rho \left( \aff{c} \right)$ is injective.
\end{proposition}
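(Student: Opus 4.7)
The plan is to prove the injectivity by induction on $k$, establishing that $KC^k \otimes_K \rho(U(\c^k)_K) \to \Weyl$ is injective for every $1 \le k \le n$; the proposition is then the case $k = n$.

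For the base case $k = 1$, we have $\c^1 = \langle c_{1j} : 1 \le j \le n \rangle$ and we apply \cref{Lmm: Zero annihilator} with the index set $I = \{(1,j) : 1 \le j \le n\}$. The resulting map $f : \N_0^n \to \N_0^n$ sends $\alpha$ to $(2\alpha_1 + \sum_{j\ge 2} \alpha_j,\, \alpha_2,\, \ldots,\, \alpha_n)$, which is injective by inspection, so $\ker \restr{\rho}{\aff{c^1}} = 0$ and the multiplication map injects into $\rho(\aff{c^1}) \subseteq \Weyl$.

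For the inductive step, assume the statement for $k-1$. Applying $(\sigma_k \cdots \sigma_n)^{-1}$ to \cref{Lmm: Induction base case} and using that the $\sigma_i$ lift the Weyl-algebra automorphisms $\tau_i$, we obtain that $K\tilde{C}^k \otimes_K \rho(U(\c^k)_K) \to \Weyl$ is injective. We now apply the Gluing Lemma (\cref{Prop: Gluing II}) with $\n = \c^{k-1}$, $\h = \langle c_{ik} : 2 \le i \le k \rangle$ (a complement so that $\g \coloneqq \n \oplus \h = \c^k$), and $T = \rho(U(\c^k) \otimes_{\Z_p} R)$ filtered by total degree in the $c$-generators. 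The multiplication-map condition for $\n = \c^{k-1}$ follows from the induction hypothesis: since $V$ is a domain (as a Tate algebra) and $\aff{c^{k-1}}$ acts on $V$ via $\rho$ purely by multiplication, the annihilators $I_v, I_v'$ of any nonzero $v \in V$ coincide with $\ker \restr{\rho}{\aff{c^{k-1}}}$ and $\ker \restr{\rho}{\hyp{c^{k-1}}}$ respectively, so the condition becomes exactly the induction hypothesis. The faithfulness condition for $KH \otimes_K T_K = K\langle c_{ik} : 2 \le i \le k \rangle$-group $\otimes_K \rho(U(\c^k)_K)$ will be inherited from the injectivity given by \cref{Lmm: Induction base case}, using that $\tilde{\c}^k$ decomposes as an abelian direct sum $\langle c_{ik} : 2 \le i \le k \rangle \oplus \langle c_{1j} : k \le j \le n\rangle$, which exhibits the corresponding Iwasawa algebra as a split subalgebra of $K\tilde{C}^k$.

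The main obstacle will be the local-finiteness condition: $H$ acts on $V$ by multiplication operators that strictly raise degree, in contrast with the degree-preserving action of $\a_-^k$ exploited in Step~3 of \cref{Lmm: Induction base case}, so one cannot simply take $W$ to be bounded in total degree. The resolution will be to mimic the construction from that step, choosing $\V = \{X_k^{-2Nr} : r \in \N_0\}$ for a suitably large integer $N$ and defining $W_v \subseteq \hyp{n} \cdot v$ as the span of monomials whose multidegrees are precisely calibrated against the negative $X_k$-degree of $v$; this negative degree will absorb the degree raised by $H$ and $F_dT$, keeping each $RH \otimes_R F_dT \cdot w$ inside a finitely generated $R$-submodule, while $W = \sum_v W_v$ and its translate by a differential operator commuting with $KH \otimes_K T_K$ together span enough of $V_0$ for the faithfulness condition to descend from \cref{Lmm: Induction base case}. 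With these choices, the Gluing Lemma delivers the required injectivity at step $k$.
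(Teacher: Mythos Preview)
Your inductive scheme and base case match the paper, but the inductive step has a genuine gap: with $\h = \langle c_{ik} : 2 \le i \le k\rangle$ acting on $V$ via the multiplication operators $-x_ix_k$, the hypotheses of \cref{Prop: Gluing II} cannot be met, and the negative-$X_k$-power device does not rescue them. The obstruction is structural. Each $W_v$ must be a $\aff{h}$-submodule of $V$ contained in $\hyp{n}\cdot v$; since $\n=\c^{k-1}$ also acts by multiplication, $\hyp{n}\cdot v$ consists of Laurent \emph{polynomials}, whereas $\aff{h}$ contains $e^{pc_{ik}}$, acting as multiplication by the transcendental series $e^{-pX_iX_k}$, which carries any nonzero Laurent polynomial outside $\hyp{n}\cdot v$. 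Hence every admissible $W_v$ is forced to be zero. Phrased in terms of local finiteness: the elements $(C_{ik}-1)^m\cdot w$ have distinct leading monomials $(-pX_iX_k)^m w$ and are $R$-linearly independent, so $RH\cdot w$ is never finitely generated over $R$ for $w\ne 0$. A fixed negative $X_k$-degree on $v$ cannot absorb an unbounded number of degree-raising applications of $H$.

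The paper avoids this by applying the automorphism $\sigma_k\cdots\sigma_n$ to the \emph{entire} gluing problem, not merely transporting the conclusion of \cref{Lmm: Induction base case} back to $\c^k$. After this twist one takes $\n=\c^{k-1}$ and $\h=\tilde\c_+^k$, whose generators now act by the degree-non-increasing operators $x_i\del_k$, $x_1\del_j$, $\del_k^2$; then $RH\otimes_R F_dT\cdot f\subseteq\{g\in K[X_1,\dots,X_n]:\deg g\le d+\deg f,\ v_p(g)\ge v_p(f)\}$ is finitely generated over $R$, and one may choose $\V=\{X_1^{-2nr}:r\in\N_0\}$ with $W_{v_r}=\langle X^\alpha:\alpha_k,\dots,\alpha_n\le r,\ |\alpha|\text{ even}\rangle_K$. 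The automorphism is precisely what converts the degree-raising multiplication operators into differential and degree-zero ones so that $\h$ acts locally finitely; without that conversion the Gluing Lemma cannot be invoked.
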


\begin{proof}
    We show by induction that the multiplication map $KC^k \otimes_K \rho \left( \hyp{c^{\textit{k}}} \right) \to \rho \left( \aff{c^{\textit{k}}} \right)$ is injective for $1 \leq k \leq n$. The base case $k=1$ follows from \cref{Lmm: Zero annihilator} as the map 
    $$(\alpha_{11}, \ldots, \alpha_{1n})\in\N_0^n\mapsto (2\alpha_{11}+\alpha_{12}+\cdots+\alpha_{1n}, \alpha_{12}, \ldots, \alpha_{1n})\in\N_0^n$$ is injective.
    Now, for the induction, fix $2\le k\le n$ and note that it is enough to show that $V_0$ is faithful over the algebra $KC^k \otimes_K \rho \left( U(\c^k )_K \right)$. Applying the automorphism $\sigma_k\cdots\sigma_n$ this is equivalent to showing that $V_0$ is faithful over the algebra $KC_+^k\otimes_K \rho \left( U(\c_+^k)_K \right)$.
    The following pictures provide an illustration of this argument for $\sp_4$. 
    
\begin{figure}[!htb]
    \centering
    \begin{subfigure}{.49\textwidth}
        \centering
        \includegraphics[page=5,valign=c]{images.pdf}
        \caption{Action of $\sigma_2$}
    \end{subfigure}
    \begin{subfigure}{.49\textwidth}
        \centering
        \includegraphics[page=6,valign=c]{images.pdf}
        \caption{Applying \cref{Prop: Gluing II}}
    \end{subfigure}
\end{figure}

    We then apply \cref{Prop: Gluing II} with  
    \begin{align*}
        \n &= \c^{k-1}, & \h &= \tilde{\c}_+^k, \\
        T & = \rho \left(U(\c_+^k\otimes_{\Z_p} R) \right)\subseteq R[x_1, \ldots, x_{k-1}, \del_k, \ldots, \del_n] , & \V &= \left\{ X_1^{-2nr} : r \in \N_0 \right\} \subseteq V,
    \end{align*}
    where we filter $T$ by total degree, $\n \oplus \h $ is a subalgebra by \cref{Lmm: Commutators} and $\c_+^k \subseteq \n \oplus \h$ by \cref{Lmm: Automorphic images}. For $v_r=X_1^{-2nr} \in \V$ let
    \begin{equation*}
        W_{v_r} \coloneqq  \langle X_1^{\alpha_1} \cdots X_n^{\alpha_n} : \alpha \in \N_0^n,\, \alpha_k, \ldots, \alpha_n \leq r,\, \abs{\alpha} \text{ is even} \rangle_K \subseteq \hyp{\n} \cdot X_1^{-2nr}
    \end{equation*}
    where the inclusion follows since
    $$X_1^{\alpha_1}\cdots X_n^{\alpha_n}= (-1)^{nr + \abs{\alpha}/2}c_{11}^{\frac{\alpha_1}{2}+nr-\left\{\frac{\alpha_2}{2}\right\}-\cdots-\left\{\frac{\alpha_{k-1}}{2}\right\}-\frac{\alpha_k+\cdots+\alpha_n}{2}} \bigg(c_{22}^{\lfloor\frac{\alpha_2}{2}\rfloor}\cdots c_{k-1,k-1}^{\lfloor\frac{\alpha_{k-1}}{2}\rfloor}\bigg)\bigg(c_{12}^{2\left\{\frac{\alpha_2}{2}\right\}}\cdots c_{1, k-1}^{2\left\{\frac{\alpha_{k-1}}{2}\right\}}\bigg)\left(c_{1k}^{\alpha_k} \cdots c_{1n}^{\alpha_n}\right) \cdot X_1^{-2nr}$$
    for $\alpha=(\alpha_1, \ldots, \alpha_n)\in\N_0^n$ with $\alpha_i\le r$ for all $i\ge k$ and $\abs{\alpha}$ even. Here $\{\cdot\}$ denotes the fractional part.

    These are both a $\aff{h}$-submodule and a $T_K$-submodule as both $\h = \tilde{\c}_+^k$ and $\c_+^k$ act by homogeneous operators of even degree, which can only increase the degrees of $X_1,\ldots,X_{k-1}$. Then $W = \langle X_1^{\alpha_1} \cdots X_n^{\alpha_n} :  \abs{\alpha} \text{ is even} \rangle_K$ and we have:
    \begin{itemize}
        \item For $f\in W$ we have $$RH \otimes_R F_dT \cdot f \subseteq \left\{ g \in K[X_1,\ldots,X_n] : \deg g \leq d + \deg f,\, v_p(g) \geq v_p(f) \right\},$$
        which is finitely generated as an $R$-module, hence so is $RH \otimes_R F_dT \cdot f$ as $R$ is noetherian.
        \item $KH\otimes_K T_K$ acts faithfully on $W$ by \cref{Lmm: Induction base case}.
        \item For every $v\in\V$, the multiplication map is injective by the inductive hypothesis.
    \end{itemize}
    The result now follows from \cref{Prop: Gluing II}.
\end{proof}

\subsection{The subalgebra \texorpdfstring{$KA$}{KA}}\label{Subsec: KA}
The next step is to show that $KA$ acts faithfully on $V_0$, so that $\restr{\rho}{KA}$ is injective. We start by looking at a Cartan subalgebra $\a_0 \coloneqq  \langle a_{ii} : 1 \leq i \leq n\rangle_{\Z_p}$ of $\g$.

\begin{lemma}\label{Prop: Cartan faithful}
    $KA_0$ acts faithfully on $K[X_1,\dots, X_n] \subseteq V_0$.
\end{lemma}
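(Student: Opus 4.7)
The key observation is that the subalgebra $\a_0$ is the Cartan subalgebra and each $a_{ii}$ acts \emph{diagonally} on the monomial basis: from $\rho(a_{ii}) = -\frac{1}{2} - x_i\del_i$ we get $\rho(a_{ii})\cdot X^\alpha = -(\frac{1}{2}+\alpha_i)X^\alpha$. Since $A_{ii}$ embeds in $\aff{a_0}$ as $\exp(pa_{ii})$, the group element $A_{ii}$ acts on $X^\alpha$ as multiplication by the scalar $e^{-p(\frac{1}{2}+\alpha_i)} \in 1+pR$ (using that $p$ is odd so $\frac{1}{2}\in\Z_p$ and the exponential converges). Consequently, a general element $\zeta = \sum_{\beta\in\N_0^n}\lambda_\beta (A_{11}-1)^{\beta_1}\cdots(A_{nn}-1)^{\beta_n}\in KA_0$, with $\lambda_\beta\in K$ uniformly bounded, acts on $X^\alpha$ by multiplication by $F(s_1(\alpha_1),\ldots,s_n(\alpha_n))$, where $F(z_1,\ldots,z_n) = \sum_\beta\lambda_\beta z^\beta$ and $s_i(\alpha_i) \coloneqq e^{-p(\frac{1}{2}+\alpha_i)}-1 \in pR$.

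The problem therefore reduces to showing: if such an $F$ vanishes at $(s_1(\alpha_1),\ldots,s_n(\alpha_n))$ for every $\alpha\in\N_0^n$, then all $\lambda_\beta = 0$. Since every $s_i(\alpha_i)$ lies in $pR$, I would rescale $z_i = pw_i$, so that
$$G(w_1,\ldots,w_n) \coloneqq F(pw_1,\ldots,pw_n) = \sum_\beta \lambda_\beta p^{|\beta|} w^\beta$$
lies in the Tate algebra $K\langle w_1,\ldots,w_n\rangle$: the uniform bound on $|\lambda_\beta|$ and $|p|<1$ force $|\lambda_\beta p^{|\beta|}|\to 0$. The rescaled evaluation points $s_i(\alpha_i)/p\in R$ are pairwise distinct as $\alpha_i$ ranges over $\N_0$, because $\alpha\mapsto e^{-p(\frac{1}{2}+\alpha)}$ is injective on $\Z_p$ (it has nonvanishing derivative).

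Then I would prove $G=0$ by induction on $n$. For $n=1$, a nonzero element of $K\langle w\rangle$ admits a Weierstrass decomposition and therefore has only finitely many zeros in $R$, contradicting vanishing at the infinite set $\{s_1(\alpha)/p:\alpha\in\N_0\}$. For the inductive step, write $G = \sum_k G_k(w_1,\ldots,w_{n-1}) w_n^k$ with $G_k\in K\langle w_1,\ldots,w_{n-1}\rangle$; for each fixed $(\alpha_1,\ldots,\alpha_{n-1})\in\N_0^{n-1}$, the partially evaluated series in $w_n$ lies in $K\langle w_n\rangle$ and vanishes on infinitely many points, so the $n=1$ case gives $G_k(s_1(\alpha_1)/p,\ldots,s_{n-1}(\alpha_{n-1})/p) = 0$ for every $k$ and every such tuple; by induction each $G_k=0$, hence $G = 0$ and $\lambda_\beta = 0$ for all $\beta$.

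The main conceptual obstacle is identifying the correct analytic framework: the Iwasawa-algebra description involves power series only with uniformly bounded (not null) coefficients, so $F$ is not \emph{a priori} in a Tate algebra. The key trick is that we are only ever evaluating $F$ on a polydisk of radius $|p|$, and this smaller-disk restriction \emph{is} a Tate-algebra element, which lets us invoke the finite-zero property of single-variable Tate algebras. Everything else is a routine inductive unpacking.
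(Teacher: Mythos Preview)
Your argument is correct. The core idea matches the paper's: both observe that $\a_0$ acts diagonally on monomials, reducing the question to whether a convergent power series vanishing on an infinite product set of $p$-adic points must be identically zero.

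The routes differ in presentation. The paper works in the larger algebra $\aff[0]{a}$ using the Lie-algebra basis $a_{11},\ldots,a_{nn}$; there the coefficients $\lambda_\alpha$ already tend to zero, so one lands directly in the Tate algebra with evaluation points $-\tfrac12-\beta_i\in\Z_p$, and the vanishing conclusion is obtained by citing \cite[Lemma 4.7]{ardakov2013verma}. You instead work with the Iwasawa-algebra presentation $(A_{ii}-1)^\beta$ of $KA_0$; this forces the rescaling step $z_i=pw_i$ to reach a genuine Tate element, but buys you a fully self-contained vanishing argument via Weierstrass preparation and induction on $n$. Your approach stays closer to the object actually named in the statement, while the paper's exploits the embedding $KA_0\hookrightarrow\aff[0]{a}$ to simplify the analytic step at the cost of an external reference.
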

\begin{proof}
    We actually show that $\aff[0]{a}$ acts faithfully on $K[X_1,\dots,X_n]$, and note that the action is well-defined as $\aff[0]{a}$ acts on $V_0$ by homogeneous operators of degree 0. Let $$\zeta=\sum_{\alpha\in\N_0^n}\lambda_{\alpha}a_{11}^{\alpha_1} \cdots a_{n n}^{\alpha_n} \in \Ann_{\aff[0]{a}} K[X_1,\dots,X_n]$$ for $\lambda_{\alpha}\in K$ with $\lambda_{\alpha}\to 0$ as $|\alpha|\to\infty$.  
    Then for each $\beta \in \N_0^n$, since $\rho(a_{ii})=-\frac{1}{2}-x_i\del_i$ for $1\le i\le n$ we have
    $$0 = \zeta \cdot X_1^{\beta_1}\cdots X_n^{\beta_n} = \sum_{\alpha\in\N_0^n}\lambda_{\alpha} \left(\prod_{i=1}^n \left(-\frac{1}{2}-\beta_i\right)^{\alpha_i} \right) 
    X_1^{\beta_1}\cdots X_n^{\beta_n}=f\left(-\frac{1}{2}-\beta_1, \ldots, -\frac{1}{2}-\beta_n\right)X_1^{\beta_1}\cdots X_n^{\beta_n}$$ where $$f(X_1, \ldots, X_n)=\sum_{\alpha\in\N_0^n}\lambda_{\alpha}X_1^{\alpha_1}\cdots X_n^{\alpha_n}\in K\langle X_1, \ldots, X_n\rangle.$$ Then by \cite[Lemma 4.7]{ardakov2013verma} it follows that $\lambda_{\alpha}=0$ for all $\alpha\in \N_0^n$, and hence $\zeta = 0$. 
\end{proof}
\begin{proposition}\label{Prop: KA faithful}
    $\restr{\rho}{KA}$ is injective.
\end{proposition}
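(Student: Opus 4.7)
The plan is to show that $V = K\langle X_1^\pm, \ldots, X_n^\pm\rangle$ is a faithful $KA$-module under the $\aff{g}$-action via $\rho$, which suffices to establish the injectivity of $\restr{\rho}{KA}$ since $V$ is a faithful $\Weyl$-module by \cref{Lmm: V is faithful}. The strategy is induction on $n$, applying the Gluing Lemma (\cref{Prop: Gluing I}) to peel off the ``first row'' and ``first column'' of $\a \cong \gl_n(\Z_p)$ at each step, reducing to the $\gl_{n-1}$-subalgebra $\langle a_{ij} : 2 \le i, j \le n\rangle_{\Z_p}$, with base case $n = 1$ handled by \cref{Prop: Cartan faithful}.

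For the outer Gluing step I would take $\n \coloneqq \langle a_{1, j} : j > 1\rangle_{\Z_p}$, which is abelian by \cref{Lmm: Commutators}, and its complement Lie subalgebra $\h \coloneqq \langle a_{ij} : 2 \le i, j \le n\rangle_{\Z_p} \oplus \langle a_{11}\rangle_{\Z_p} \oplus \langle a_{i, 1} : i > 1\rangle_{\Z_p}$, together with $\V = \{X_1^{-M} : M \in \N\} \subseteq V$ and $W_v = \hyp{n} \cdot v$. The computation $\rho(a_{1, j_1} \cdots a_{1, j_r}) \cdot X_1^{-M} = M(M+1) \cdots (M + r - 1)\, X_{j_1} \cdots X_{j_r} X_1^{-M - r}$ shows $\Ann_{\aff{n}}(v) = 0$, so the multiplication map condition reduces to the injectivity of $KN \otimes_K \hyp{n} \hookrightarrow \aff{n}$ for the abelian $\n$, which holds by \cite[Theorem 3.2]{ardakov2013verma}. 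A direct calculation verifies $W_v$ is $\aff{h}$-stable: the $\gl_{n-1}$-block and Cartan $a_{11}$ preserve the form $X_1^{-M-|k|} X_2^{k_2} \cdots X_n^{k_n}$ by rearranging the positive exponents $(k_2, \ldots, k_n)$, while the first-column elements $a_{i, 1}$ trade $X_i$-mass for $X_1$-mass with the unbounded negative $X_1$-exponent absorbing the change. Local finiteness of $RH$ on $W$ follows because $\h$ preserves the total degree $-M$ within each $W_v$, bounding the finitely many reachable monomials.

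The main obstacle is the faithfulness of $KH$ on $W$, since $\h$ still contains the non-abelian $\gl_{n-1}$-subalgebra. I would resolve this via a nested application of the Gluing Lemma with inner $\n' \coloneqq \langle a_{i, 1} : i > 1\rangle_{\Z_p}$ (abelian) and $\h' \coloneqq \langle a_{ij} : i, j \ge 2\rangle_{\Z_p} \oplus \langle a_{11}\rangle_{\Z_p}$; the Fourier symmetry $\sigma_1$ of \cref{Lmm: Weyl automorphisms}, which sends $a_{i, 1} \mapsto -b_{i, 1}$ and $a_{1, j} \mapsto -c_{1, j}$, is exploited to transport the inner multiplication-map condition into a statement about an abelian subalgebra of $\b$, to which \cref{Prop: Multiplication map} applies. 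The faithfulness of $KH'$ on the appropriate inner submodule follows by combining the inductive hypothesis (which provides faithfulness of the $\gl_{n-1}$-block on the polynomial ring $K[X_2, \ldots, X_n]$, recovered inside $W$ because each truncation $K[X_2, \ldots, X_n]_{\deg \le d}$ embeds into $W$ via $X_1^{\beta_1}$ with $|\beta_1|$ sufficiently large) with \cref{Prop: Cartan faithful} for $\langle a_{11}\rangle_{\Z_p}$, exploiting that $a_{11}$ commutes with the $\gl_{n-1}$-block. A careful choice of $\V'$ inside $W$ ensures $W' \subseteq W$, closing the nested induction.
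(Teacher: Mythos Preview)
Your outer Gluing step is sound: with $\n=\langle a_{1,j}:j>1\rangle_{\Z_p}$ and $v=X_1^{-M}$, the operators $\rho(a_{1,j})=-x_j\partial_1$ act freely on $v$ because $\partial_1$ is not nilpotent on negative powers of $X_1$, so $\Ann_{\aff{n}}(v)=0$ and the remaining hypotheses check out. The genuine gap is in the inner step. Your $\n'=\langle a_{i,1}:i>1\rangle_{\Z_p}$ acts via $\rho(a_{i,1})=-x_1\partial_i$, and $\partial_i$ (for $i\ge 2$) is locally nilpotent on any monomial whose $X_i$-exponent is a non-negative integer. Since $\Hyp{\n'}$ can only \emph{decrease} the $X_i$-exponents for $i\ge 2$, any monomial $v'$ with $\Hyp{\n'}\cdot v'$ containing a nonzero element of $W$ must itself have all such exponents non-negative. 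For such $v'$ the annihilator $I_{v'}$ contains $a_{i,1}^{a_i+1}$ for each $i\ge 2$ (with $a_i$ the $X_i$-exponent of $v'$), so $\Aff{\n'}/I_{v'}$ is finite-dimensional over $K$; meanwhile $KN'\otimes_K \Hyp{\n'}/I'_{v'}$ is infinite-dimensional for $v'\ne 0$, and the multiplication map cannot be injective. Your appeal to $\sigma_1$ does not rescue this: $\sigma_1$ sends $\n'$ into $\b$, acting by $\partial_i\partial_1$, which is still locally nilpotent in the $X_i$-direction on the relevant vectors; and \cref{Prop: Multiplication map} is about $\c$, where the action is by \emph{multiplication} and annihilators coincide with $\ker\rho$ --- a structurally different situation from yours.

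The paper sidesteps this by applying the automorphism $\sigma_k$ \emph{before} any Gluing, transforming $\a_k$ into $\a_k^\pm=\a_{k-1}\oplus\b_k\oplus\c_k$. Both Gluing applications then take $\n=\c_k=\langle c_{ik}:1\le i\le k-1\rangle_{\Z_p}$, which acts by the multiplication operators $-x_ix_k$ and therefore has zero annihilator on every nonzero $v\in V$, reducing the multiplication-map condition to \cref{Lmm: Zero annihilator}. The first application uses $\h=\a_{k-1}$ (handled by induction) to obtain faithfulness of $KA_k^-$, after which $\sigma$ transports this to $KA_k^+$; the second application then uses $\h=\a_k^+=\a_{k-1}\oplus\b_k$ to conclude $KA_k^\pm$ is faithful. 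The essential point you are missing is that the $\n$-slot in the Gluing Lemma must act with zero (or at least non-cofinite) annihilator on the chosen vectors, which rules out using the lowering operators $a_{i,1}$ directly and forces the detour through $\c$.
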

\begin{proof}
    For $1 \leq k \leq n$ let 
    \begin{equation*}
        \a_k\coloneqq \langle a_{ij}:1\le i, j\le k \rangle_{\Z_p}\oplus\langle a_{ii}: k+1\le i\le n\rangle_{\Z_p} \qquad
        \b_k\coloneqq \langle b_{ik}: 1\le i\le k-1\rangle_{\Z_p} \qquad 
        \c_k\coloneqq \langle c_{ik}: 1\le i\le k-1\rangle_{\Z_p}
    \end{equation*}
    \begin{equation*}
        \a_k^-\coloneqq \a_{k-1}\oplus\c_k \qquad
        \a_k^+\coloneqq \sigma(\a_k^-)=\a_{k-1}\oplus\b_k \qquad
        \a_k^{\pm}\coloneqq \sigma_k(\a_k)=\a_{k-1}\oplus\b_k\oplus\c_k,
    \end{equation*}
    where the equalities follow by \cref{Lmm: Weyl automorphisms} and note that these are subalgebras by \cref{Lmm: Commutators}.
    We prove by induction on $k$ for $1\le k\le n$ that $\restr{\rho}{KA_k}$ is injective. By \cref{Lmm: V is faithful} this is equivalent to showing that $V_0$ is faithful as a $KA_k$-module. The case $k=1$ follows then from \cref{Prop: Cartan faithful} as $\a_1=\a_0$. Now fix $2\le k\le n$ and suppose $\restr{\rho}{KA_{k-1}}$ is injective. By applying the automorphism $\sigma_k$, it is then enough to show that $KA_k^{\pm}$ acts faithfully on $V_0$, which we do in two steps. The following pictures provide an illustration of this argument for $\sp_4$.
    
\begin{figure}[!htb]
    \centering
    \begin{subfigure}{0.33\linewidth}
        \centering
        \includegraphics[page=7,valign=c]{images.pdf}
        \caption{Action of $\sigma_1$}
    \end{subfigure}
    \begin{subfigure}{0.66\linewidth}
        \centering 
        \includegraphics[page=8,raise=-4338009sp]{images.pdf} $\to$
        \includegraphics[page=9,raise=-4338009sp]{images.pdf} 
        \caption{Using \cref{Prop: Gluing II} to show $\restr{\rho}{KA_k^{\pm}}$ is injective}
    \end{subfigure}
\end{figure}
    \noindent \textbf{Step 1:} The restriction $\restr{\rho}{KA_k^+}$ is injective.
    
    We apply \cref{Prop: Gluing I} with
    \begin{align*}
        \n&=\c_k, & \h&=\a_{k-1}, & \V&=\{X_k^{r_k}\cdots X_n^{r_n}: r_k\in\Z, r_{k+1}, \ldots, r_n\in\N_0\} \subseteq V
    \end{align*}
    and note that $\n\oplus\h = \a_k^-$.  For $v=X_k^{r_k}\cdots X_n^{r_n} \in \mathcal{V}$ let
    $$W_v\coloneqq\langle X_1^{\alpha_1}\cdots X_n^{\alpha_n}: \alpha\in\N_0^n, \, \alpha_k=\alpha_1+\cdots+\alpha_{k-1}+r_k, \, \alpha_i=r_i \text{ for } i>k\rangle_K \subseteq \hyp{n}\cdot X_k^{r_k}\cdots X_n^{r_n},$$ where the inclusion follows since
    $$X_1^{\alpha_1}\cdots X_n^{\alpha_n}= (-1)^{\alpha_1 + \cdots + \alpha_{k-1} } c_{1k}^{\alpha_1}\cdots c_{k-1, k}^{\alpha_{k-1}}\cdot X_k^{r_k}\cdots X_n^{r_n}$$ for all $\alpha\in\N_0^n$ with $\alpha_k=\alpha_1+\cdots+\alpha_{k-1}+r_k$ and $\alpha_i=r_i$ for $i>k$.
    These are $\aff{h}$-submodules since $\a_{k-1}$ acts by homogeneous elements of degree 0 that cannot change the individual degrees in any of $X_k,\ldots,X_n$ nor the total degree in $X_1,\ldots,X_{k-1}$. Then $W=K[X_1, \ldots, X_n]$ and we have 
    \begin{itemize}
        \item $RH$ acts locally finitely on $W$ since for $f\in W$, $$RH\cdot f \subseteq \left\{ g \in K[X_1,\ldots,X_n] : \deg g=\deg f,\, v_p(g) \geq v_p(f) \right\},$$
        which is finitely generated as an $R$-module, hence so is $RH\cdot f$ as $R$ is noetherian.
        \item $KH$ acts faithfully on $K[X_1, \ldots, X_n]$ since by the induction hypothesis $\ker\restr{\rho}{KH}=0$ and $\rho(KH)$ acts faithfully on $K[X_1, \ldots, X_n]$ by \cref{Lmm: V is faithful}.
        \item The multiplication map is injective by \cref{Prop: Multiplication map} (or alternatively by \cref{Lmm: Zero annihilator}).
    \end{itemize}
    So $V_0$ is faithful as a $KA_k^-$-module, and applying the automorphism $\sigma$ we get that $KA_k^+$ acts faithfully on $V_0$ as well. Then $\restr{\rho}{KA_k^+}$ is injective.

    \noindent \textbf{Step 2:} $V_0$ is a faithful $KA_k^{\pm}$-module.
    
    Now we apply \cref{Prop: Gluing I} again, replacing $\h = \a_{k-1}$ with $\h = \a_k^+ = \a_{k-1} \oplus \b_k$ and keeping everything else unchanged. That is,
    \begin{align*}
        \n&=\c_k, & \h&=\a_k^+ & \V&=\{X_k^{r_k}\cdots X_n^{r_n}: r_k\in\Z, r_{k+1}, \ldots, r_n\in\N_0\} \subseteq V.
    \end{align*}
    and $$W_v\coloneqq\langle X_1^{\alpha_1}\cdots X_n^{\alpha_n}: \alpha\in\N_0^n, \, \alpha_k=\alpha_1+\cdots+\alpha_{k-1}+r_k, \, \alpha_i=r_i \text{ for } i>k\rangle_K \subseteq \hyp{n}\cdot X_k^{r_k}\cdots X_n^{r_n}.$$
    Note that $\n\oplus\h=\a_k^{\pm}$ and that $W_v$ are still $\aff{h}$-submodules since the action of $\b_k$ keeps $\alpha_k-(\alpha_1+\cdots+\alpha_{k-1})$ invariant. Then we have
    \begin{itemize}
        \item $RH$ still acts locally finitely on $W$ since for $f\in W$, $$RH\cdot f \subseteq \left\{ g \in K[X_1,\ldots,X_n] : \deg g\le \deg f,\, v_p(g) \geq v_p(f) \right\},$$
        which is finitely generated as an $R$-module, hence so is $RH\cdot f$ as $R$ is noetherian.
        \item $KH$ acts faithfully on $K[X_1, \ldots, X_n]$ since by the previous step $\restr{\rho}{KH}$ is injective, and $\rho(KH)$ acts faithfully on $K[X_1, \ldots, X_n]$ by \cref{Lmm: V is faithful}.
        \item The multiplication map is injective by \cref{Prop: Multiplication map} (or alternatively by \cref{Lmm: Zero annihilator}).
    \end{itemize}
    So $V_0$ is faithful as a $KA_k^{\pm}$-module, completing the induction.
\end{proof}

\subsection{Final Gluing} \label{Subsec: Gluing}
We can now glue all three subalgebras together to prove the main result.
\begin{theorem}\label{Thm: Main}
    $\restr{\rho}{KG}$ is injective. 
\end{theorem}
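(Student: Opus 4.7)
The plan is to apply the Gluing Lemma (\cref{Prop: Gluing I}) to the vector-space decomposition $\g = \c \oplus \p_+$, where $\p_+ \coloneqq \a \oplus \b$ is the Lie subalgebra of $\g$ complementary to the abelian $\c$, with corresponding uniform pro-$p$ group $P_+$. Setting $\n = \c$, $\h = \p_+$, and $V_0 = K[X_1, \ldots, X_n]$, the multiplication-map condition on $\n = \c$ is \cref{Prop: Multiplication map}, and local finiteness of the $RP_+$-action on $V_0$ holds since $\a$ preserves polynomial degree while $\b$ lowers it.

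For $\V$ and the $W_v$, I would take $\V = \{X_1^{-1}, X_1^{-2}\} \subseteq V$, with $W_{X_1^{-2}}$ the even-total-degree polynomials and $W_{X_1^{-1}}$ the odd-total-degree polynomials, so that $W = V_0$. Each $W_v$ is $\aff{p_+}$-stable because $\a$ preserves total degree while $\b$ shifts it by an even integer, so parity is preserved. The inclusion $W_{X_1^{-2}} \subseteq \hyp{c} \cdot X_1^{-2}$ holds since every even polynomial $q$ can be written as $\rho(u) \cdot X_1^{-2}$ with $\rho(u) = x_1^2 q \in \rho(\hyp{c})$, the subring of even-total-degree polynomials; the analogous argument with $\rho(u) = x_1 q$ handles the odd case.

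The remaining condition, that $KP_+$ act faithfully on $V_0$, is equivalent by \cref{Lmm: V is faithful} to $\restr{\rho}{KP_+}$ being injective. I would establish this as a preliminary lemma via a further application of \cref{Prop: Gluing I} with $\n = \b$ and $\h = \a$: the $KA$-faithfulness condition is \cref{Prop: KA faithful}; the multiplication-map condition for $\b$ follows from \cref{Prop: Multiplication map} via the Fourier automorphism $\sigma$ of \cref{Lmm: Weyl automorphisms} that interchanges $\b$ and $\c$; and the $W_v$'s can be chosen as unions of full total-degree components of $V_0$, bounded above in degree and constrained in parity by the exponents of $v$, which are $\aff{a}$-stable because $\a$ preserves total degree. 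The main technical obstacle lies in this preliminary step, because $\aff{a}$-stability of $W_v \subseteq \hyp{b} \cdot v$ is delicate: the $\a$-action can move exponents outside the range produced by $\hyp{b} \cdot v$, and the resolution requires defining $W_v$ as a union of full degree slices rather than naively taking $\hyp{b} \cdot v$ itself, in the same spirit as the $W_v$ constructions throughout \cref{Subsec: KA}.
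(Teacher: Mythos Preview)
Your final step---applying \cref{Prop: Gluing I} with $\n=\c$ and $\h=\p_+=\a\oplus\b$---coincides with the paper's (where $\p_+$ is called $\t$); your choice $\V=\{X_1^{-1},X_1^{-2}\}$ works just as well as the paper's $\{1,X_1^{-1}\}$. The problem is in your preliminary lemma.

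You propose to prove $\restr{\rho}{KP_+}$ injective by a further application of \cref{Prop: Gluing I} with $\n=\b$, $\h=\a$, asserting that the multiplication-map condition for $\b$ follows from \cref{Prop: Multiplication map} via the automorphism $\sigma$. It does not. The subalgebra $\b$ acts on $K[X_1,\ldots,X_n]$ by the degree-lowering operators $\del_i\del_j$, so for any polynomial $v$ of degree $d$ every monomial $\mathbf{b}^\alpha$ with $2|\alpha|>d$ annihilates $v$; hence $I_v=\Ann_{\aff{b}}(v)$ has \emph{finite} codimension in $\aff{b}$. But $KB\otimes_K\hyp{b}/I_v'$ contains a copy of $KB$ (via $KB\otimes(1+I_v')$) and is therefore infinite-dimensional, so the map $KB\otimes_K\hyp{b}/I_v'\to\aff{b}/I_v$ cannot possibly be injective. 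What $\sigma$ genuinely transports from \cref{Prop: Multiplication map} is the injectivity of $KB\otimes_K\rho(\hyp{b})\to\rho(\aff{b})$, i.e.\ the statement with $\ker\restr{\rho}{\aff{b}}$ in place of $I_v$. For $\c$ these two ideals coincide because $\c$ acts by multiplication in the domain $V$, giving $\Ann_{\aff{c}}(v)=\ker\restr{\rho}{\aff{c}}$ for every nonzero $v$; for $\b$ they are completely different (indeed $\ker\restr{\rho}{\aff{b}}=0$ while $I_v$ is huge). The paper sidesteps this by never taking $\n=\b$ inside the Gluing Lemma: it first runs \cref{Prop: Gluing I} with $\n=\c$, $\h=\a$ to obtain faithfulness for $K(\a\oplus\c)$, and only then applies $\sigma$ \emph{to the conclusion}---using $\sigma(\a\oplus\c)=\a\oplus\b$ together with $\sigma$-invariance of $\ker\rho$---to deduce faithfulness for $KP_+$.
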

\begin{proof}
    Let $\s\coloneqq\a\oplus\c$ and $\t\coloneqq\a\oplus\b$ and note that these are subalgebras by \cref{Lmm: Commutators}. The following pictures provide an illustration of this argument for $\sp_4$.
    \[
        \includegraphics[page=10,raise=-3986129sp]{images.pdf} \qquad \to \qquad\quad 
        \includegraphics[page=11,raise=-4121799sp]{images.pdf}
    \]
    We first show that $V_0$ is faithful as a $KS$-module by applying \cref{Prop: Gluing I} with 
    \begin{gather*}
       \n=\c, \qquad \h=\a, \qquad  \V=\{1, X_1^{-1}\} \subseteq V, \\
        W_1=\langle X_1^{\alpha_1}\ldots X_n^{\alpha_n}: \alpha\in\N_0^n, \abs{\alpha} \text{ is even}\rangle_K \subseteq \hyp{n} \cdot 1, \\
        W_{X_1^{-1}}=\langle X_1^{\alpha_1}\cdots X_n^{\alpha_n}: \alpha\in\N_0^n, \abs{\alpha}\text{ is odd}\rangle_K \subseteq \hyp{n} \cdot X_1^{-1},
    \end{gather*}
    where the inclusions follow as $\{X_iX_j : 1 \leq i,j \leq n\}$ generates $W_1$.
    Note that $\n\oplus\h=\s$ and that $W_1$ and $W_{X_1^{-1}}$ are $\aff{h}$-modules since the elements of $\a$ act by homogeneous operators of degree $0$. Then $W=K[X_1, \ldots, X_n]$ and we have
    \begin{itemize}
        \item $RH$ acts locally finitely on $W$ since for $f\in W$, $$RH\cdot f \subseteq \left\{ g \in K[X_1,\ldots,X_n] : \deg g=\deg f,\, v_p(g) \geq v_p(f) \right\},$$
        which is finitely generated as an $R$-module, hence so is $RH\cdot f$ as $R$ is noetherian.
        \item $KH$ acts faithfully on $K[X_1, \ldots, X_n]$ since by \cref{Prop: KA faithful} $\restr{\rho}{KH}$ is injective, and $\rho(KH)$ acts faithfully on $K[X_1, \ldots, X_n]$ by \cref{Lmm: V is faithful}.
        \item The multiplication map is injective by \cref{Prop: Multiplication map}.
    \end{itemize}
    Then $V_0$ is faithful as a $KS$-module and by applying the automorphism $\sigma$ it is also faithful as a  $KT$-module so $\restr{\rho}{KT}$ is injective. 
        
    We finally apply \cref{Prop: Gluing II} again, only changing $\h$. In particular,
    \begin{gather*}
       \n=\c, \qquad \h=\t, \qquad  \V=\{1, X_1^{-1}\} \subseteq V, \\
        W_1=\langle X_1^{\alpha_1}\ldots X_n^{\alpha_n}: \alpha\in\N_0^n, \abs{\alpha} \text{ is even}\rangle_K \subseteq \hyp{n} \cdot 1, \\
        W_{X_1^{-1}}=\langle X_1^{\alpha_1}\cdots X_n^{\alpha_n}: \alpha\in\N_0^n, \abs{\alpha}\text{ is odd}\rangle_K \subseteq \hyp{n} \cdot X_1^{-1},
    \end{gather*}
    Note that $\n\oplus\h=\g$ and $W_1$ and $W_{X_1^{-1}}$ are still $\aff{t}$-submodules since the elements of $\b$ act by homogeneous operators of even non-positive degree. Then:
    \begin{itemize}
        \item $RH$ acts locally finitely on $W = K[X_1,\ldots,X_n]$ since for $f\in W$, $$RH\cdot f \subseteq \left\{ g \in K[X_1,\ldots,X_n] : \deg g\le \deg f,\, v_p(g) \geq v_p(f) \right\},$$
        which is finitely generated as an $R$-module, hence so is $RH\cdot f$ as $R$ is noetherian.
        \item $KH$ acts faithfully on $K[X_1, \ldots, X_n]$ since $\restr{\rho}{KH} = \restr{\rho}{KT}$ is injective, and $\rho(KH)$ acts faithfully on $K[X_1, \ldots, X_n]$ by \cref{Lmm: V is faithful}.
        \item The multiplication map is injective by \cref{Prop: Multiplication map}.
    \end{itemize}
    So $V_0$ is faithful as a $KG$-module and hence $\restr{\rho}{KG}$ is injective.
\end{proof}

\section{Abelian subalgebras}\label{Sec: Abelian subalgebras}
In this section, let $G$ be any uniform pro-$p$ group, and $H \leq G$ a torsion-free abelian pro-$p$ group. We give conditions for maps out of $KH$ with infinite-dimensional image to be injective, first in terms of closed subgroups of $\Aut(H)$, and then in terms of the normaliser of $\h$ in $\g$.

\subsection{Invariant ideals in commutative algebras}
Let $H$ have topological generating set $H_1,\ldots,H_d$. We equip $KH$ with the $p$-adic filtration 
$$w_p\left( \sum_{\alpha\in\N_0^d}\lambda_{\alpha}\left(\mathbf{H}-1 \right)^{\alpha} \right)=\inf_{\alpha\in\N_0^d}v(\lambda_{\alpha})$$
where $(\mathbf{H}-1)^\alpha = (H_1-1)^{\alpha_1}\cdots (H_d-1)^{\alpha_d}$ and the $\lambda_{\alpha}\in K$ are uniformly bounded. Note that $w_p$ is complete and separated with $\gr KH \cong kH[s,s^{-1}]$, where $s$ is the image of the uniformiser of $K$.

We endow $\Aut(H)$ with the congruence topology as defined in \cite[\textsection 5.2]{ddms2003analytic}, and let $i: \Gamma \hookrightarrow \Aut(H)$ be a closed subgroup such that the action of $\Gamma$ on $H$ is uniform, namely $$[\Gamma, H] \coloneqq  \{\gamma(h)h^{-1} : \gamma \in \Gamma,\,h \in H\} \subseteq H^p.$$ In particular, $\Gamma$ is then isomorphic to a closed subgroup of the first congruence subgroup $\Gamma_1 \coloneqq 1 + pM_d(\Z_p)$ of $\GL_d(\Z_p)$, which is uniform by \cite[Theorem 5.2]{ddms2003analytic}. Moreover, $\Gamma$ is a finitely generated pro-$p$ group by \cite[Theorem 3.8]{ddms2003analytic}, and every $\Gamma^l \coloneqq \left\langle\gamma^l : \gamma \in \Gamma \right\rangle \leq \Aut(H)$ is also a closed subgroup by \cite{martinez1994power}.

As in \cite[Section 4.2]{ardakovwadsley} we have that $\L(H)$ is naturally an $\L(\Gamma)$-module and we will further assume it is irreducible.

We extend the action of $\Gamma$ first to $RH$ by linearity and then to $KH$, as the $\Gamma$-action does not affect the $w_p$-filtration. Similarly, $\Gamma$ acts on the local ring $kH$, and is exactly chosen so that \cite[Corollary 8.1]{ardakov2012prime} gives

\begin{lemma}\label{Lmm: Invariant ideals in kH}
    For $l \in \N$, the only $\Gamma^l$-invariant prime ideals of $kH$ are 0 and the maximal ideal $\m = (H_1-1,\ldots,H_d-1)$.
\end{lemma}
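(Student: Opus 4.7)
The plan is to reduce this statement to a direct application of \cite[Corollary 8.1]{ardakov2012prime}, which is the version of the result for $\Gamma$ itself. Since the lemma is stated for every $l \in \N_0$, the task is essentially to verify that the two structural hypotheses (i) and (ii) imposed on $\Gamma$ in the preamble descend to each $\Gamma^l$, at which point the cited corollary closes the argument.

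First I would dispatch condition (ii): the containment $[\Gamma^l, H] \subseteq [\Gamma, H] \subseteq H^p$ is immediate from $\Gamma^l \leq \Gamma$, so nothing further is needed there.

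The main work is condition (i), irreducibility of $\L(H)$ as an $\L(\Gamma^l)$-module. The cleanest route is to show that $\L(\Gamma^l) = \L(\Gamma)$ inside $\L(\Gamma_1)$, which reduces everything to the hypothesis already in hand. As noted in the discussion before the lemma, $\Gamma$ embeds as a closed subgroup of the uniform group $\Gamma_1 = 1 + pM_d(\Z_p)$, hence $\Gamma$ is itself a finitely generated pro-$p$ group; by \cite{martinez1994power}, $\Gamma^l$ is then a closed subgroup of finite index, and consequently open, in $\Gamma$. Open subgroups of $p$-adic analytic groups share the same $\Q_p$-Lie algebra, so $\L(\Gamma^l) = \L(\Gamma)$ and the irreducibility of $\L(H)$ over $\L(\Gamma)$ transports verbatim to $\L(\Gamma^l)$.

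With (i) and (ii) confirmed for $\Gamma^l$, the conclusion is a direct invocation of \cite[Corollary 8.1]{ardakov2012prime}. The genuinely delicate point in this plan is the irreducibility transfer; once one knows that $\Gamma^l$ is open in $\Gamma$ (which is the content of the Nikolov--Segal type result of Martinez), the Lie algebra identification and the rest of the argument are formal.
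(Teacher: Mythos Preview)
Your proposal is correct and follows essentially the same route as the paper: both reduce to verifying the hypotheses of \cite[Corollary 8.1]{ardakov2012prime} for $\Gamma^l$, with the key step being $\L(\Gamma^l) = \L(\Gamma)$. One minor caution on the citation: Martinez's result gives closedness of $\Gamma^l$ in $\Gamma$, but not finite index directly; openness follows instead from the $p$-adic analytic structure (an open uniform subgroup $U \leq \Gamma$ has $U^{p^k}$ open in $U$), or you can sidestep openness altogether and read $\L(\Gamma^l) = \L(\Gamma)$ straight off the definition of $\L$ in \cite[\S 9.5]{ddms2003analytic}, as the paper does, since $l$-th powers in the group become multiplication by $l$ after passing to the $\Q_p$-Lie algebra.
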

This uses the observation that $\L(\Gamma^l) = \L(\Gamma)$, which follows from $L_{\Gamma^l}=lL_{\Gamma}$ (see \cite[Theorem 7.4]{CompactPadic}), and recalling that 
$$\L(\Gamma^l) = L_{\Gamma^l} \otimes \Q_p  = l L_\Gamma \otimes \Q_p = \L(\Gamma),$$
where the first and last equalities are the definition of $\L$, and the third follows from the proof of \cite[Theorem 9.8]{ddms2003analytic}. This maximality condition lifts to a condition on $\Gamma$-invariant ideals in $KH$.

\begin{proposition}\label{Prop: Invariant ideals in KH}
    Let $I$ be an non-zero, $\Gamma$-invariant ideal in $KH$. Then $I$ has finite $K$-codimension.
\end{proposition}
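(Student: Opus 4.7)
The plan is to exploit the commutativity of $KH$ and the $\pi$-adic structure inherited from $R$ to reduce the question to one about ideals of $kH$, where the Lemma applies directly.

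The first step is to descend from $KH$ to $RH$. Let $\pi$ be a uniformiser of $R$ and set $J \coloneqq I \cap RH$, a non-zero $\Gamma$-invariant ideal of $RH$ satisfying $J \otimes_R K = I$. I would observe that $J$ is $\pi$-saturated in $RH$: if $x \in RH$ with $\pi x \in J$, then $x = \pi^{-1}(\pi x)$ lies in $I \cap RH = J$. The crucial preliminary claim is that the reduction $\bar J \subseteq kH = RH/\pi RH$ is non-zero; indeed, $\bar J = 0$ would mean $J \subseteq \pi RH$, and combined with saturation this forces $J = \pi J$, which iterated together with the $\pi$-adic separatedness of $RH$ gives $J \subseteq \bigcap_n \pi^n RH = 0$, contradicting $J \neq 0$.

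Next I would invoke the Lemma through primary decomposition. The commutative noetherian ring $kH$ has only finitely many minimal primes above $\bar J$, and $\Gamma$ permutes this finite set. The stabiliser of any such prime has finite index in $\Gamma$, hence is open in the finitely generated pro-$p$ group $\Gamma$; since $\Gamma$ modulo this stabiliser is then a finite $p$-group of some exponent $p^n$, the stabiliser contains $\Gamma^{p^n}$. The Lemma now forces each non-zero minimal prime to equal $\m$, so $\sqrt{\bar J} = \m$ and $\m^N \subseteq \bar J$ for some $N$.

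Finally I would transfer this finite-codimension statement back up to $KH$. Since $kH$ is a local noetherian ring with finitely generated maximal ideal $\m$ and residue field $k$, the quotient $kH/\m^N$ is finite-dimensional over $k$, and hence so is $kH/\bar J = RH/(J + \pi RH)$. Applying the topological Nakayama lemma to the $\pi$-adically complete $R$-module $RH/J$ then shows that $RH/J$ is finitely generated over $R$, and tensoring with $K$ gives that $KH/I = (RH/J) \otimes_R K$ is finite-dimensional over $K$. I expect the most delicate step to be the orbit-stabiliser argument, in particular verifying that the stabiliser of a minimal prime contains a power-subgroup $\Gamma^{p^n}$ to which the Lemma applies; this rests on the continuity of the $\Gamma$-action on $kH$ (so that the stabiliser is open of finite index) together with the standard fact that any open subgroup of a finitely generated pro-$p$ group contains $\Gamma^{p^n}$ for $n$ large.
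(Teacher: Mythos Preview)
Your argument is correct and follows essentially the same strategy as the paper: reduce to an ideal of $kH$, use primary decomposition and the permutation action of $\Gamma$ on the minimal primes to invoke \cref{Lmm: Invariant ideals in kH}, conclude that the radical is $\m$, and then lift the resulting finite-codimension statement back to $KH$.

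The only difference is in the packaging of the reduction and the lift. The paper works with the associated graded for the filtration $w_p$: it identifies $\gr KH \cong kH[s,s^{-1}]$, writes $\gr I = J[s,s^{-1}]$, and then quotes filtered--graded results from Li--van Oystaeyen to pass from finite $k$-dimension of $kH/J$ to finite $K$-dimension of $KH/I$. You instead take $J = I \cap RH$, reduce mod $\pi$, and lift via the topological Nakayama lemma. These are two descriptions of the same object: saturation of $I\cap RH$ gives $\gr_0 I = (I\cap RH)/\pi(I\cap RH) = \bar J$, so your $\bar J$ is exactly the paper's $J$. Your route is slightly more self-contained, avoiding the external reference; the paper's graded formulation is more systematic and would adapt more readily to other filtrations. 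One small wording point: ``$\Gamma$ modulo the stabiliser'' is not literally a group since the stabiliser need not be normal; cleaner is to pass to the kernel of the permutation map $\Gamma \to S_m$, whose image is a finite $p$-group of exponent $p^n$, so $\Gamma^{p^n}$ fixes every $\p_i$ simultaneously.
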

\begin{proof}
    If $I = KH$ this is trivial, so suppose not. Consider the $\gr K$-module $\gr KH$, and the non-zero, proper, $\Gamma$-invariant, graded submodule $\gr I$. Identifying $\gr KH \cong kH[s,s^{-1}]$, we see that $s$ is a unit, and so each graded component of $\gr I$ is equal. Thus we may write $\gr I = J[s,s^{-1}]$ for some ideal $J \subseteq kH$.
    
    As $kH$ is noetherian, $\rad(J) = \bigcap_{i=1}^m \p_i$, where $\p_1,\ldots,\p_m$ are minimal prime ideals of $kH$. As $\rad(J)$ is also $\Gamma$-invariant and $\Gamma$ acts by automorphisms, each element of $\Gamma$ permutes the $\p_i$, so we can find an $l \geq 1$ such that $\Gamma^l$ fixes all of the $\p_i$. So by \cref{Lmm: Invariant ideals in kH}, $\rad(J) = \m$.
    
    Thus we can find an $r \geq 0$ such that $\m^r \subseteq J$, and since $\faktor{kH}{\m^r}$ is finite-dimensional over $k$ so is $\faktor{kH}{J}$. Now note that $$\faktor{\gr KH}{\gr I} \cong \faktor{kH[s,s^{-1}]}{J[s,s^{-1}]} \cong \left(\faktor{kH}{J}\right)[s,s^{-1}]$$ so $\faktor{\gr KH}{\gr I}$ is finite-dimensional over $\gr K \cong k[s,s^{-1}]$. By \cite[Theorem I.4.2.4]{huishi1996zariskian} we have $\faktor{\gr KH}{\gr I} \cong \gr \left(\faktor{KH}{I} \right)$   
    so $\gr\left(\faktor{KH}{I} \right)$ is generated over $\gr K$ by finitely many elements of degree 0. Finally, by \cite[Theorem I.5.7]{huishi1996zariskian}, $\faktor{KH}{I}$ has finite $K$-dimension.
\end{proof}

This immediately gives the following corollary.

\begin{corollary}\label{Cor: General invariance}
    Let $T$ be a filtered $K$-algebra, and let $\psi: KH \to T$ a filtered $K$-algebra homomorphism such that $\psi(KH)$ is not finite $K$-dimensional. If $\ker \psi$ is $\Gamma$-invariant, then $\psi$ is injective. 
\end{corollary}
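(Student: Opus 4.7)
The plan is to prove this as a direct contrapositive application of \cref{Prop: Invariant ideals in KH}. Set $I \coloneqq \ker\psi$, which is an ideal of $KH$, and observe that by hypothesis $I$ is $\Gamma$-invariant. The strategy is to show that if $I$ were non-zero, then $\psi(KH) \cong KH/I$ would be forced to be finite-dimensional over $K$, contradicting the hypothesis on the image.

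Concretely, I would argue as follows. Suppose $\psi$ is not injective, so $I \neq 0$. Since $I$ is a non-zero $\Gamma$-invariant ideal of $KH$, \cref{Prop: Invariant ideals in KH} immediately gives that $I$ has finite $K$-codimension in $KH$. But then the first isomorphism theorem yields $\psi(KH) \cong KH/I$, which is finite-dimensional over $K$, contradicting the assumption that $\psi(KH)$ is not finite $K$-dimensional. Hence $I = 0$, i.e.\ $\psi$ is injective.

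There is essentially no obstacle here; the real content is already packaged in \cref{Prop: Invariant ideals in KH}. The only minor point to be careful about is that the filtered algebra structure on $T$ and the assumption that $\psi$ is a filtered homomorphism play no role in this argument — only the underlying $K$-algebra structure is used. This is fine: the filtered hypothesis is presumably recorded for later applications rather than for this particular deduction.
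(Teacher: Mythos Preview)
Your argument is correct and matches the paper's own treatment: the corollary is stated as an immediate consequence of \cref{Prop: Invariant ideals in KH}, and your contrapositive via $\psi(KH)\cong KH/\ker\psi$ is exactly the intended deduction. Your observation that the filtered hypotheses are not actually used here is also accurate.
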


\subsection{Abelian subalgebras}

A key example of such automorphism groups $\Gamma$ comes from conjugation. 
Let $\n \subseteq \g$ be a subalgebra contained in the normaliser $\{x \in \g : [x,y] \in \h \text{ for } y \in \h \}$ of $\h$, and $N$ its associated pro-$p$ group. We assume that $\g/\n$ is torsion-free as a $\Z_p$-module so that by \cite[Proposition 7.15]{ddms2003analytic}, $N \leq_c G$ is a closed uniform subgroup.

\begin{theorem}\label{Prop: Invariance from conjugation}
    Let $T$ be a filtered $K$-algebra, and let $\psi: KG \to T$ be a filtered $K$-algebra homomorphism such that $\psi(KH)$ is not finite $K$-dimensional. If $\L(H)$ is an irreducible $\L(N)$-module, then $\restr{\psi}{KH}$ is injective.
\end{theorem}
\begin{proof}
    We first show that the conjugation homomorphism $\varphi: N \to \Aut(H)$ by $x \mapsto (\varphi_x: y\mapsto xyx^{-1})$ is continuous. To this end, suppose a net $(x_{\lambda})_{\lambda \in \Lambda}$ converges to $1_N$ in $N$. It suffices to prove $\varphi_{x_{\lambda}}$ converges to $\id_H$ in the congruence topology; that is, that for any open normal subgroup $H' \lhd_o H$, there is a $\mu_{H'} \in \Lambda$ such that $[\varphi_{x_\lambda}, H] \subseteq H'$ for $\lambda \geq \mu_{H'}$.

    Fix some $H' \lhd_o H$. For any $y \in H$, $\varphi_{x_\lambda}(y) y^{-1} = [x_\lambda,y]$ converges to $1_H$ in $H$, so there is a $\mu_y \in \Lambda$ such that $[x_\lambda, y] \in H'$ for $\lambda \geq \mu_y$. If $h \in H'$, we also see that $[x_\lambda, hy] \in H'$ by normality, so $[x_\lambda, g] \in H'$ whenever $g \in H'y$, $\lambda \geq \mu_y$. As $H$ is profinite, $H'$ has finite index in $H$, so it is enough to take $\mu_{N'} \geq \mu_{y_i}$, $1 \leq i \leq m$, for some finite collection $y_1,\ldots,y_m$ of coset representatives of $H/H'$.

    We check that the action of $\varphi(N)$ on $H$ satisfies the hypotheses of \cref{Cor: General invariance}.
    
    First, note that $[\n,\h] \leq  \h \cap p \g = p\h$ as $\g$ is a powerful Lie algebra and $\g/\h$ is torsion-free. Then the Campbell-Baker-Hausdorff formula \cite[Chapter 3]{Hall2003} guarantees that $[\varphi(N), H] = [N,H] \leq H^p$, so the action of $\varphi(N)$ on $H$ is uniform. The map  $\L(\varphi) : \L(N) \to \L(\varphi(N))$ is surjective by \cite[\textsection 9, Ex. 7]{ddms2003analytic}, so $\L(H)$ is irreducible as an $\L(\varphi(N))$-module. Finally, $\ker \psi$ is a two-sided ideal of $KG$ and the conjugation action of $N$ on $G$ fixes $H$, so both $\ker \psi$ and $KH$ are $N$-invariant. Thus $\ker \restr{\psi}{KH} = KH \cap \ker \psi$ is $N$-invariant.
    
    Hence we can apply \cref{Cor: General invariance} with $\Gamma = \varphi(N)$.
\end{proof}

We apply \cref{Prop: Invariance from conjugation} with $G = \exp(p \sp_{2n}(\Z_p))$, $H = C$, $N=A$ and $\psi=\rho$ as in \cref{Sec: Introduction} to obtain the injectivity of the restrictions $\restr{\rho}{KB}$ and $\restr{\rho}{KB}$. Note then $\g = p \sp_{2n}(\Z_p)$, $\h = p\c$, $\n = p\c$ in the notation of \cref{Sec: Introduction}.
 
\begin{corollary}\label{Prop: KC faithful}
    $\restr{\rho}{KC}$ and $\restr{\rho}{KB}$ are injective.
\end{corollary}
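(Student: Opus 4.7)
The plan is a direct application of \cref{Prop: Invariance from conjugation} with $\psi = \rho : KG \to \Weyl$, taken as a filtered $K$-algebra homomorphism for the natural $p$-adic filtrations on both sides, together with $N = A$ and $H \in \{B, C\}$. Four hypotheses have to be verified.

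The first two are immediate. By \cref{Lmm: Commutators}, $[\a, \c] \subseteq \c$ and $[\a, \b] \subseteq \b$, so $\a$ sits inside the normaliser of $\c$ and of $\b$ in $\g$. Moreover, $\g / \a$ is $\Z_p$-torsion-free since $\g = \a \oplus \b \oplus \c$ is a direct sum of free $\Z_p$-modules.

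Next I need that $\rho(KC)$ and $\rho(KB)$ are not finite-dimensional over $K$. For $C$, under the embedding $KC \hookrightarrow \aff{c}$ the element $e^{p c_{11}} - 1$ maps to $e^{-p x_1^2} - 1 \in \Weyl$, and the powers $(e^{-p x_1^2}-1)^m$ have leading term $(-p)^m x_1^{2m}$ in the $x$-degree, hence are $K$-linearly independent. The same argument using $\rho(b_{11}) = \del_1^2$ takes care of $B$.

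The fourth and main condition is that $\L(C) = \c_{\Q_p}$ is irreducible as a module over $\L(A) = \a_{\Q_p} \cong \gl_n(\Q_p)$. The commutation relations of \cref{Lmm: Commutators}, namely $[a_{ij}, c_{kl}] = -\delta_{ik}c_{jl} - \delta_{il}c_{jk}$, identify $\c_{\Q_p}$ with the symmetric square $\mathrm{Sym}^2(V^*)$ of the dual of the standard $\gl_n$-representation $V = \Q_p^n$. This is a classical irreducible highest-weight module: on restriction to $\sl_n$ its highest weight is $-2\epsilon_n$, with dimension $\binom{n+1}{2}$ matching the Weyl dimension formula, and the centre of $\gl_n$ acts by a scalar. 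The corresponding statement for $\b_{\Q_p} \cong \mathrm{Sym}^2(V)$ is identical, and may also be deduced by transporting the $C$-case through the automorphism $\sigma$ of \cref{Lmm: Weyl automorphisms}, which swaps $\b$ and $\c$ while preserving $\a$. With all four hypotheses established, \cref{Prop: Invariance from conjugation} delivers the injectivity of $\restr{\rho}{KC}$ and $\restr{\rho}{KB}$. The only step with any substance is the irreducibility in hypothesis four, and that is a standard fact from finite-dimensional Lie representation theory over a field of characteristic zero, so no real obstacle is expected.
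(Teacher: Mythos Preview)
Your proposal is correct and follows essentially the same route as the paper: apply \cref{Prop: Invariance from conjugation} with $N=A$ and $H\in\{B,C\}$, verifying the normaliser, torsion-freeness, infinite image, and irreducibility hypotheses. The only cosmetic differences are that you use $c_{11}$ rather than $c_{12}$ to witness infinite-dimensionality, and you identify $\c_{\Q_p}\cong\mathrm{Sym}^2(V^*)$ directly over $\Q_p$ instead of passing to $\bar\Q_p$ and invoking highest-weight theory there; both are entirely adequate.
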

\begin{proof} 
    $KC$ acts on the Tate algebra $K\langle X_1,X_2,...X_n\rangle$ via $\rho$ with $$(C_{12}-1)\cdot X_1^{n_1}X_2^{n_2} = \left(\exp \left(-px_1x_2 \right) -1\right)\cdot X_1^{n_1}X_2^{n_2} = -pX_1^{n_1+1}X_2^{n_2+1} +\frac{p^2}{2}X_1^{n_1+2}X_2^{n_2+2} + \cdots$$ which shifts the total trailing degree by $2$, and hence the actions of $(C_{12}-1)^n$ for $n\in\N$ are linearly independent. Then the image of $\rho(KC)$ is infinite dimensional.
    
    By \cref{Lmm: Commutators}, $p\a$ is contained in the normaliser of $p\c$. $p\mathfrak{a}\otimes_{\Z_p}\Bar{\Q}_p$ contains a copy of $\mathfrak{sl}_{n}(\Bar{\Q}_p)$, so we can view $p\mathfrak{c}\otimes_{\Z_p}\Bar{\Q_p}$ as a $\mathfrak{sl}_{n}(\Bar{\Q}_p)$-module by restriction of scalars. Now by \cref{Lmm: Commutators} $p\c$ has a unique highest weight vector $pc_{nn}$ up to scalars for the choice of positive roots corresponding to $a_{ij}$ with $i<j$, hence it is simple. Then $\L(C) = p\c\otimes_{\Z_p}\Q_p$ is simple for $\sl_{n}(\Q_p)$ and therefore also for $\L(A) = p\a\otimes_{\Z_p}\Q_p$.

    The argument for $H=B$ is completely analogous.
\end{proof}

\renewcommand\bibname{\scriptsize References}
\printbibliography[title=References]

@article{ardakov2013verma,
  title={Verma modules for Iwasawa algebras are faithful},
  author={Ardakov, Konstantin and Wadsley, Simon},
  journal =      "M{\"u}nster J. of Math. 7 (2014), 5–26",
  volume =       "7",
  pages =        "5--26",
  year =         "2014",
  DOI =          "urn:nbn:de:hbz:6-22359595785",
}

@book{folland1989harmonic,
  title={Harmonic Analysis in Phase Space},
  author={Folland, Gerald B},
  number={122},
  year={1989},
  publisher={Princeton University Press},
  doi={10.1515/9781400882427},
}

@book{ddms2003analytic,
  title={Analytic Pro-p Groups},
  author={Dixon, John D and Du Sautoy, Marcus PF and Mann, Avinoam and Segal, Dan},
  number={61},
  year={2003},
  publisher={Cambridge University Press},
  doi={10.1017/CBO9780511470882}
}

@article{ardakov2012prime,
  title={Prime ideals in nilpotent Iwasawa algebras},
  author={Ardakov, Konstantin},
  journal={Inventiones mathematicae},
  volume={190},
  number={2},
  pages={439-503},
  year={2012},
  publisher={Springer},
  doi={10.1007/s00222-012-0385-4}
}

@article{stephen2022TypeA,
  title={Faithfulness of generalised Verma modules for
Iwasawa algebras},
  author={Mann, Stephen},
  journal =      "Journal of Algebra",
  volume =       "614",
  pages =        "417-457",
  year =         "2023",
  DOI =          "10.1016/j.jalgebra.2022.09.018",
}

@article{PeterSchneider,
title={Algebras of p-adic distributions and admissible representations},
journal={Inventiones mathematicae},
volume={153},
pages={145-196},
year={2002},
author={Schneider, Peter and Teitelbaum, Jeremy}
}

@book{huishi1996zariskian,
  title={Zariskian Filtrations},
  author={{Li Huishi} and van Oystaeyen, Freddy},
  shortauthor = {Li and van Oystaeyen, Freddy},
  year={1996},
  publisher={Springer},
  DOI={10.1007/978-94-015-8759-4}
}

@article{ardakov2013irreducible,
  title={On irreducible representations of compact p-adic analytic groups},
  author={Ardakov, Konstantin and Wadsley, Simon},
  journal={Annals of Mathematics},
  pages={453--557},
  year={2013},
  publisher={JSTOR}
}

@article{martinez1994power,
  title={On power subgroups of profinite groups},
  author={Mart{\'\i}nez, Consuelo},
  journal={Transactions of the American Mathematical Society},
  volume={345},
  number={2},
  pages={865--869},
  year={1994}
}

@phdthesis{pangalosdiss,
    author ={Pangalos, A} ,
    title = {p-adic Weyl algebras},
    school ={Munster University} ,
    year = {2007},
url={https://d-nb.info/987679023/34}
}

@phdthesis{manndiss,
    author = {Mann, Stephen D.},
    title = {Faithfulness of highest-weight modules for Iwasawa algebras},
    school = {University of Cambridge},
    year = {2022},
DOI={10.17863/CAM.99798}
}

@book{Hall2003,
author={Hall, Brian C.},
title={Lie Groups, Lie Algebras, and Representations: An Elementary Introduction},
year={2003},
publisher={Springer New York},
isbn={978-0-387-21554-9},
DOI={10.1007/978-0-387-21554-9}
}

@book{CompactPadic,
author={Klopsch, Benjamin and Nikolov, Nikolay and Voll, Christopher},
title={An introduction to compact p-adic Lie groups},
year={2012},
publisher={Cambridge University Press},
DOI={10.1017/CBO9780511793837.003}}

@article{ardakovwadsley,
  title = {$\Gamma$-Invariant Ideals in {{Iwasawa}} Algebras},
  author = {Ardakov, K. and Wadsley, S. J.},
  date = {2009-09-01},
  journaltitle = {Journal of Pure and Applied Algebra},
  shortjournal = {Journal of Pure and Applied Algebra},
  volume = {213},
  number = {9},
  pages = {1852--1864},
  issn = {0022-4049},
  doi = {10.1016/j.jpaa.2009.02.001},
  url = {https://www.sciencedirect.com/science/article/pii/S0022404909000383},
  abstract = {Let kG be the completed group algebra of a uniform pro-p group G with coefficients in a field k of characteristic p. We study right ideals I in kG that are invariant under the action of another uniform pro-p group Γ. We prove that if I is non-zero then an irreducible component of the characteristic support of kG/I must be contained in a certain finite union of rational linear subspaces of SpecgrkG. The minimal codimension of these subspaces gives a lower bound on the homological height of I in terms of the action of a certain Lie algebra on G/Gp. If we take Γ to be G acting on itself by conjugation, then Γ-invariant right ideals of kG are precisely the two-sided ideals of kG, and we obtain a non-trivial lower bound on the homological height of a possible non-zero two-sided ideal. For example, when G is open in SLn(Zp) this lower bound equals 2n−2. This gives a significant improvement of the results of [K. Ardakov, F. Wei, J.J. Zhang, Reflexive ideals in Iwasawa algebras, Adv. Math. 218 (2008) 865–901].}
}
\end{document}